\newtheorem{theorem}{Theorem}[section]
\newtheorem{definition}{Definition}[section]
\newtheorem{proposition}{Proposition}[section]
\newtheorem{lemma}{Lemma}[section]
\newtheorem{corollary}{Corollary}[section]
\numberwithin{equation}{section}
\newcommand{\diag}{\operatorname{diag}}
\newcommand{\GL}{\operatorname{GL}}
\newcommand{\supp}{\operatorname{supp}}
\newcommand{\cov}{\operatorname{cov}}
\newcommand{\bt}{\mathbf{t}}
\newcommand{\bq}{\mathbf{q}}
\newcommand{\bx}{\mathbf{x}}
\newcommand{\cL}{\mathcal{L}}
\newcommand{\bw}{\mathbf{w}}
\newcommand{\f}{\mathbf{f}}
\newcommand{\rk}{\operatorname{rank}}
\newcommand{\bc}{\mathbf{c}}
\newcommand{\by}{\mathbf{y}}
\newcommand{\al}{\alpha}
\newcommand{\Q}{\mathbb{Q}}
\newcommand{\Z}{\mathbb{Z}}
\newcommand{\R}{\mathbb{R}}
\newcommand{\N}{\mathbb{N}}
\newcommand{\inv}{^{\text{-}1}}
\begin{document}

\title[$p$-adic Diophantine approximation]{Multiplicative $p$-adic metric Diophantine approximation on manifolds and dichotomy of exponents}

\author{Shreyasi Datta}
\address{\textbf{Shreyasi Datta} \\
School of Mathematics,
Tata Institute of Fundamental Research, Mumbai, India 400005}
\email{shreya@math.tifr.res.in}

\author{Anish Ghosh}
\address{\textbf{Anish Ghosh} \\
School of Mathematics,
Tata Institute of Fundamental Research, Mumbai, India 400005}
\email{ghosh@math.tifr.res.in}
\date{}

\thanks{A.\ G.\ gratefully acknowledges support from a grant from the Indo-French Centre for the Promotion of Advanced Research; a Department of Science and Technology, Government of India Swarnajayanti fellowship and a MATRICS grant from the Science and Engineering Research Board.}

\date{}

\begin{abstract}
In this paper we study $p$-adic Diophantine approximation on manifolds, specifically multiplicative Diophantine approximation on affine subspaces and a Diophantine dichotomy for analytic $p$-adic manifolds. 
\end{abstract}

\maketitle

\section{Introduction}\label{sec:intro}
This paper is concerned with $p$-adic Diophantine approximation on manifolds, specifically with multiplicative Diophantine approximation and a Diophantine dichotomy in the $p$-adic setting.  In an earlier paper \cite{DGpart1}, we introduced a new $p$-adic Diophantine exponent and answered questions of Kleinbock  and Kleinbock-Tomanov concerning $p$-adic Diophantine approximation on affine subspaces. The Diophantine exponent we introduced is better suited to homogeneous dynamics. In this paper, we continue our study by establishing multiplicative versions of our results and also establishing a Diophantine dichotomy for $p$-adic analytic manifolds.   
For $\bq = (q_1, \dots, q_n) \in \Z^n$ and $q_0 \in \Z$, set  $\tilde{\bq} := (q_0, q_1, \dots, q_n)$. Following Kleinbock and Tomanov, \cite{KT}, we define the Diophantine exponent $w(\by)$ of $\by \in \Q_p^{n}$ to be the supremum of $v > 0$ such that there are infinitely many $\tilde{\bq} \in \Z^n$ satisfying
\begin{equation}\label{def:exp}
|q_0 + \bq\cdot \by_p  \leq \|\tilde{\bq}\|_\infty^{-v}.
\end{equation}
In view of Dirichlet's theorem (\cite{KT} \S 11.2), $w(\by) \geq n+1$ for every $\by \in \Q_p^{n},$ with equality for Haar almost every $\by$ by the Borel-Cantelli lemma. We will also need the following definition from \cite{DGpart1}.

\begin{definition}{\textbf{v-$\Z[1/p]$ approximable vectors:}} $\by\in\Q_p^n$ is $v-\Z[1/p]$-approximable if there exist $\tilde{\bq}$ with unbounded $||\bq||_p||\tilde{\bq}||_\infty$ such that 
		\begin{equation}\label{Z_S}
		|\bq.\by+q_0|_p<\frac{1}{(||\bq||_p||\tilde{\bq}||_\infty)^v||\tilde{\bq}||_\infty},
		\end{equation}
		 where we recall that $\tilde{\bq}=(q_0,\bq)\in\Z[1/p]^{n+1}$.\end{definition}
\noindent
We will denote $v-\Z[1/p]$-approximable points by $\mathcal{W}_v^{p}$ and also define 
\begin{equation}\label{def:papp}
w_{p}(\by):=\sup\{v \text{ appearing in } (\ref{Z_S}) \}.
\end{equation}

By Proposition $3.1$ from \cite{DGpart1}, we have that for any $\by\in\Q_p^n$ we have 
\begin{equation}\label{relation}
w_{p}(\by)=w(\by)+1.
\end{equation}
In \cite{DGpart1}, we studied the exponents $w_p$ and $w$ in detail and proved an inheritance result for the exponent $w$ when restricted to submanifolds. This result was proved using the dynamical technique of Kleinbock and Margulis, and the exponent $w_p$ played a key role.\\

 In \cite{Kleinbock-dichotomy}, D. Kleinbock established a remarkable \emph{dichotomy} with regards to certain Diophantine properties. In particular, he proved that for connected, analytic manifolds, having one not very well approximable point implies that almost every point is not very well approximable. Our first result addresses this in the $p$-adic context.

 \begin{theorem}\label{theorem-dicho}
		Suppose $\mathcal{M}$ is an analytic manifold of $\Q_p^n$. Let $v\geq n+1$ and suppose $w(\by_0)\leq v$ for some $\by_0\in\mathcal{M}$ then for almost every $\by$  around a neighbourhood of $\by_0$, $w(\by)\leq v$. 
	\end{theorem}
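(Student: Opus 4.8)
The plan is to reduce the assertion to two facts about affine subspaces that are established, or are readily derivable, in \cite{DGpart1}: a \emph{universal} lower bound for $w$ on an affine subspace, and the \emph{inheritance} of the generic exponent by nonplanar analytic submanifolds. Since the conclusion is local around $\by_0$, first choose an analytic chart: a ball $U\subseteq\Q_p^d$ around some $x_0$ and an analytic embedding $\f\colon U\to\Q_p^n$ with $\f(x_0)=\by_0$ parametrizing a neighbourhood of $\by_0$ in $\mathcal{M}$. Let $\cA$ be the affine hull of $\f(U)$; shrinking $U$ if necessary, analyticity guarantees $\cA$ is independent of $U$ and that $\f$ is \emph{nonplanar in $\cA$}, i.e.\ $\f(U)$ lies in no proper affine subspace of $\cA$. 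For analytic $\f$ the pushforward of Haar measure on $U$ is $(C,\al)$-good with uniform constants on $U$, which is exactly the hypothesis feeding the $p$-adic quantitative nondivergence estimate of Kleinbock--Tomanov.

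Next I would record the free lower bound: for \emph{every} $\by\in\cA$ one has $w(\by)\ge w(\cA)$, where $w(\cA)$ denotes the $\cA$-almost-everywhere constant (and in fact minimal) value of $w$ on $\cA$. This comes from a transference argument using the coordinate relations defining $\cA$, exactly as in the real case, and is contained in the affine-subspace analysis of \cite{DGpart1}. Applying it to $\by=\by_0$ and invoking the hypothesis gives
\[
w(\cA)\ \le\ w(\by_0)\ \le\ v .
\]

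The main step is the inheritance on the submanifold. Since $\f$ is analytic and nonplanar in $\cA$, the inheritance result of \cite{DGpart1} — proved by the Kleinbock--Margulis dynamical method via quantitative nondivergence on the space of $\Q_p$-lattices, with the exponent $w_p$ (and the relation $w_p=w+1$ from \eqref{relation}) serving as the bridge, followed by a Borel--Cantelli argument — yields $w(\f(x))\le w(\cA)$ for Lebesgue-almost every $x\in U$. Combined with the lower bound of the previous paragraph this gives $w(\f(x))=w(\cA)$ for a.e.\ $x\in U$, and since $w(\cA)\le v$ we conclude that $w(\by)\le v$ for almost every $\by$ in the chosen neighbourhood of $\by_0$, which is the assertion.

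I expect the substantive obstacle to be this last step: applying $p$-adic quantitative nondivergence to analytic (hence good) nonplanar maps into a possibly \emph{irrational} affine subspace $\cA$, where the relevant trajectory lives over a twisted lattice rather than the standard one, and establishing the matching universal lower bound of the second paragraph in that irrational setting. Both are precisely the technical core, and are handled by importing and lightly adapting the corresponding statements from \cite{DGpart1}; the localization and the bookkeeping that the a.e.\ value of $w$ on $\mathcal{M}$ coincides with the universal minimum $w(\cA)$ are then routine.
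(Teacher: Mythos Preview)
Your route is valid but differs substantially from the paper's. You pass through the affine hull $\cA$ of a local chart and invoke two black boxes from \cite{DGpart1}: the universal lower bound $w(\by)\ge w(\cA)$ for all $\by\in\cA$, and the inheritance of the generic exponent by a nonplanar analytic submanifold. Since analytic maps are nondegenerate in their affine span, this does yield the dichotomy, and indeed the paper itself remarks (just after its Section~3 theorems) that the nondegenerate-in-$\cL$ case was already covered by Theorem~6.2 of \cite{DGpart1}. So your reduction is legitimate, provided one is happy to cite those results wholesale.

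The paper, however, argues \emph{directly} with no affine hull in sight. It translates the hypothesis and conclusion into the dynamical quantity $\gamma(\by)=\sup\{c:\delta(g_t u_\by\mathcal{D}^{n+1})\le p^{-ct}\text{ i.o.}\}$, using the identity $w_p=\tfrac{n(1+\gamma)+\gamma}{1-(n+1)\gamma}$ and $w=w_p-1$. The single-point assumption $\gamma(\f(\bx_0))\le\gamma$ is upgraded to the covolume lower bound required by condition~(ii) of the quantitative nondivergence theorem via a short Minkowski-type lemma: for any $\Delta\in\mathcal{P}(\mathcal{D},n+1)$, $\delta(g\mathcal{D}^{n+1})\le\cov(g\Delta)^{1/\rk\Delta}$. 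Goodness of the covolume functions follows from analyticity; then Theorem~\ref{QND2} plus Borel--Cantelli gives $\gamma(\f(\bx))\le\gamma$ a.e.\ on a ball. A closure argument ($U_2=\bar U_1\cap U$) shows the good set is open, completing the local statement.

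What each buys: your approach is shorter if one is willing to import the full affine-subspace machinery, and it makes explicit that the a.e.\ exponent on $\mathcal{M}$ equals $w(\cA)$. The paper's approach is self-contained at the dynamical level, avoids entirely the issue you flag about possibly irrational $\cA$ (there is no $\cA$), and isolates the Minkowski lemma as the elementary mechanism converting a pointwise hypothesis into the sup-norm condition needed for nondivergence --- a step that has no counterpart in your outline.
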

 
We now turn our attention to multiplicative Diophantine approximation, see \cite{Bugeaud} for a survey of this topic which has several parallels with, as well as striking differences from, Diophantine approximation with the usual norm.  For $\bq = (q_1, \dots, q_n) \in \Z^n$ and $q_0 \in \Z$, set  $\tilde{\bq} := (q_0, q_1, \dots, q_n)$.  We define $$\vert q\vert_{+}=
	\left\{\begin{array}{ll} ~ \vert q\vert_\infty  &\text{ if } q\neq 0 \\
	\\ ~1&~ \text{otherwise}
	\end{array}\right .$$ for $q\in\mathcal{D}$. For $q\in\Z$ this definition matches with the classical one. Further define $$\Pi_{+}(\tilde{\bq})=\prod_{i=0,\cdots,n} \vert q_i\vert_{+}.$$

Say  that $\by \in \Q^{n}_p$ is very well multiplicatively approximable (VWMA) if, for some $\varepsilon > 0$ there are infinitely many solutions $\tilde{\bq} = (q_0, q_1, \dots, q_n) \in \Z^{n+1}$ to
\begin{equation} 
|q_0 + \bq\cdot \by|_p  \leq \Pi_{+}(\tilde{\bq})^{-(1+\varepsilon)}. 
\end{equation}




 Following Kleinbock \cite{Kleinbock-exponent}, we say that a differentiable map $f : U \to \Q^{n}_{p}$, where $U$ is an open subset of $\Q^{d}_{p}$, is nondegenerate in an affine subspace $\mathcal{L}$ of $\Q^{n}_p$ at $x \in U$ if $f(U) \subset \mathcal{L}$ and the span of all the partial derivatives of $f$ at $x$ up to some order coincides with the linear part of $\mathcal{L}$. If $\mathcal{M}$ is a $d$-dimensional submanifold of $\mathcal{L}$, we will say that $\mathcal{M}$ is nondegenerate in $\mathcal{L}$ at $y \in \mathcal{M}$ if there exists a diffeomorphism $f$ between an open subset $U$ of $\Q^{d}_{p}$ and a neighbourhood of $y$ in $\mathcal{M}$ is nondegenerate in $\mathcal{L}$ at $f^{-1}(y)$. We also denote the Haar measure on $\Q_p^d$ by $\lambda$. Finally, we will say that $f:U\to \mathcal{L}$ (resp., $\mathcal{M} \subset \mathcal{L}$) is nondegenerate in $\mathcal{L}$ if it is nondegenerate in $\mathcal{L}$ at $\lambda$-a.e. point of $U$ (resp., of $\mathcal{M}$, in the sense of the smooth measure class on $\mathcal{M}$).

\noindent A manifold is  called strongly extremal if almost every point with respect to the Lebesgue measure is not very well multiplicatively approximable.  Our second result resolves the conjecture by Kleinbock and Tomanov in \cite{KT} in the multiplicative case.


\begin{theorem}\label{thm:mult}
	Let $\mathcal{L}$ be an affine subspace of $\Q_p^n$
and let $\f : \Q_p^d\mapsto \mathcal{L}$  be a $C^k$ map which is nondegenerate in $\mathcal{L}$ at $\lambda$ a.e. point. Suppose
that the volume measure
 $\lambda$ on $\mathcal{L}$ 
 is strongly extremal, then so is $\f_*\lambda$.                                                                          
\end{theorem}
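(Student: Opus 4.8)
The plan is to translate strong extremality into a statement about non-escape of orbits in the space of $\Z[1/p]$-lattices in $\Q_p^{n+1}$ (embedded $S$-arithmetically, $S=\{\infty,p\}$, as in \cite{KT}), and then to run the $p$-adic quantitative nondivergence machinery of Kleinbock and Tomanov, but with the usual hypothesis of full nondegeneracy in $\Q_p^n$ \emph{replaced} by nondegeneracy in $\mathcal{L}$ together with strong extremality of $\mathcal{L}$ itself. Recall that $\by$ is VWMA precisely when, for some $\gamma>0$, there are unboundedly many $\bt=(t_0,t_1,\dots,t_n)$ in the cone $\{t_i\geq 0,\ t_0=\sum_{i\geq 1}t_i\}$ with $\delta\!\left(a_\bt u_{\by}\Lambda_0\right)\leq -\gamma\|\bt\|$, where $a_\bt$ is the associated multiparameter diagonal semigroup element, $u_\by$ the unipotent carrying $\by$ in its last column, $\Lambda_0=\Z[1/p]^{n+1}$, and $\delta(\Lambda)$ is the $\log$ of the norm of a shortest nonzero vector of $\Lambda$. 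Hence a measure $\mu$ on $\Q_p^n$ is strongly extremal iff for each $\gamma>0$ the set of $\by$ whose orbit ``$\gamma$-escapes'' is $\mu$-null, and it suffices to verify this for $\mu=\f_*\lambda$ knowing it for the volume measure on $\mathcal{L}$.

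Next I would put $\mathcal{L}$ and $\f$ in standard form. Write $\f(x)=\f(\mathbf 0)+\Psi\,\psi(x)$ with $\Psi$ a fixed $n\times e$ matrix whose columns span the linear part of $\mathcal{L}$ ($e=\dim\mathcal{L}$) and $\psi\colon\Q_p^d\to\Q_p^e$ a $C^k$ map; the hypothesis that $\f$ is nondegenerate in $\mathcal{L}$ at $\lambda$-a.e.\ point is exactly that $\psi$ is nondegenerate in $\Q_p^e$, in the usual sense of \cite{Kleinbock-exponent}, at $\lambda$-a.e.\ point. Correspondingly the unipotent factors, $u_{\f(x)}=\mathtt u_{\mathcal{L}}\,\tilde u_{\psi(x)}$ with $\mathtt u_{\mathcal{L}}$ a fixed element encoding $\mathcal{L}$, and $a_\bt u_{\f(x)}\Lambda_0=\big(a_\bt\mathtt u_{\mathcal{L}}a_\bt^{-1}\big)\,a_\bt\tilde u_{\psi(x)}\Lambda_0$: the $\f$-orbit is an ``$\mathcal{L}$-cocycle perturbation'' of an orbit driven only by the genuinely nondegenerate map $\psi$.

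I would then apply the $p$-adic quantitative nondivergence theorem to $\f_*\lambda$. As $\lambda$ is Federer and $\psi$ is $C^k$ and nondegenerate, for each rational subspace $W\subset\Q^{n+1}$ the functions $x\mapsto\|a_\bt u_{\f(x)}\mathbf w_W\|$ are uniformly $(C,\alpha)$-good on balls and nonplanar; hence over any ball $B$ in the domain, either the $\gamma$-escaping set at time $\bt$ has $\lambda$-measure $\leq C\eta^\alpha$ with $\eta$ small, or some rational $W$ is ``abnormally contracted'', i.e.\ $\|a_\bt u_{\f(x)}\mathbf w_W\|$ is uniformly tiny on $B$. The crux is that, by the factorization above and the nondegeneracy of $\psi$, abnormal contraction of $W$ over a whole ball forces the fixed data $\mathtt u_{\mathcal{L}}$ --- a generic point of $\mathcal{L}$ --- to have its own orbit $\gamma'$-escape along $\bt$; but a generic point of a strongly extremal $\mathcal{L}$ is not VWMA, so this cannot occur once $\|\bt\|$ is large. (This is exactly where the $\Z[1/p]$-exponent $w_p$ of \cite{DGpart1}, and its multiplicative analogue, is the correct bookkeeping device, since it is the quantity that reads off cleanly from the lattice picture.) With abnormal contraction excluded for large $\bt$, the measure estimate holds; summing over a dyadic decomposition of the cone of $\bt$ --- the multiplicative Kleinbock--Margulis argument adapted to $\Q_p$ --- and invoking Borel--Cantelli shows the $\gamma$-escaping set is $\f_*\lambda$-null, and intersecting over $\gamma=1/k$ yields strong extremality of $\f_*\lambda$.

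The step I expect to be the main obstacle is making rigorous the implication ``abnormally contracted rational subspace for the $\f$-orbit $\Rightarrow$ well-approximability of $\mathcal{L}$'': one must track precisely how the flag condition for $u_{\f(x)}$ interacts with the conjugated factor $a_\bt\mathtt u_{\mathcal{L}}a_\bt^{-1}$, using nondegeneracy of $\psi$ to eliminate every ``submanifold direction'' so that only the directions intrinsic to $\mathcal{L}$ can account for the contraction; this is the multiplicative, $p$-adic counterpart of Kleinbock's analysis of extremal subspaces and is where the companion paper's exponent is essential. A secondary but more routine difficulty is the multi-parameter summation over the unbounded cone $\{t_i\geq 0,\ t_0=\sum t_i\}$, which requires grouping the $\bt$'s into blocks and verifying that the nondivergence estimates decay fast enough in the block size.
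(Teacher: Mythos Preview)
Your overall architecture is the same as the paper's: both reduce to the multiparameter dynamical criterion (the paper's Lemma~\ref{dani_vwma}), then feed this into the $p$-adic quantitative nondivergence theorem (Theorem~\ref{QND2}), verifying the $(C,\alpha)$-goodness from nondegeneracy of $\f$ in $\mathcal{L}$ and the covolume lower bound from strong extremality of $\mathcal{L}$, and finish with Borel--Cantelli over the cone of $\bt$'s.

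The genuine difference is in how the step you correctly flag as ``the main obstacle'' is handled. You propose a factorization $u_{\f(x)}=\mathtt u_{\mathcal{L}}\,\tilde u_{\psi(x)}$ and a conjugation argument to show that abnormal contraction along the $\f$-orbit forces escape for a generic point of $\mathcal{L}$. The paper bypasses this entirely: using the exterior-algebra computation of $\cov(g_{\bt}u_{\f(x)}\Delta)$ carried out in \cite[(6.12) and \S7]{DGpart1}, one sees that once $(\f,\mu)$ is nonplanar in $\mathcal{L}$, the supremum in condition (ii) of Proposition~\ref{prop_m1} is attained at a value that depends only on the matrix $R$ parametrizing $\mathcal{L}$ and on $\bw\in\bigwedge^j\mathcal{D}^{n+1}$, not on $\f$ at all; this is the displayed condition \eqref{covolume1m}. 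Hence the covolume hypothesis is \emph{literally the same} for the identity parametrization of $\mathcal{L}$ and for $\f$, and the reverse direction of Lemma~\ref{dani_vwma} applied to a non-VWMA point of $\mathcal{L}$ yields it directly. Your route would also work in principle, but the paper's is shorter and avoids tracking the flag through the conjugation $a_{\bt}\mathtt u_{\mathcal{L}}a_{\bt}^{-1}$.

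Two minor corrections to your setup: the cone is $\bt\in\Z_+^{n+1}$ with $t=\sum_{i=0}^n t_i$ (no constraint $t_0=\sum_{i\geq 1}t_i$; the weight $t$ enters via the $p$-component $g_{\bt}^{p}=\diag(p^{-t},1,\dots,1)$), and $\delta$ in the paper is the minimum of the content $c(\cdot)$, not its logarithm.
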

Our third result shows a dichotomy for the multiplicative case as follows.
 \begin{theorem}
 		For any analytic manifold of $\Q_p^n$ if one point is not very well multiplicatively approximable then the set of not very well multiplicatively approximable points has positive measure.
 \end{theorem}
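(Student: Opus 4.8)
The plan is to run the argument behind Theorem \ref{theorem-dicho}, but with the multiplicative strong extremality statement of Theorem \ref{thm:mult} in the role played there by the inheritance of the exponent $w$. Fix a point $\by_0\in\mathcal{M}$ that is not very well multiplicatively approximable (VWMA) and work entirely in a small neighbourhood of $\by_0$. I would choose a coordinate chart, i.e.\ an open connected $U\subseteq\Q_p^d$ and an analytic diffeomorphism $\f\colon U\to\mathcal{M}$ onto a neighbourhood of $\by_0$ in $\mathcal{M}$ with $\f(x_0)=\by_0$, and let $\mathcal{L}$ be the affine hull of $\f(U)$, the smallest affine subspace of $\Q_p^n$ containing $\f(U)$. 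By analyticity, $\mathcal{L}$ is unchanged under further shrinking of $U$, and the minimality of $\mathcal{L}$ together with the local power-series expansion of $\f$ forces $\f$ to be nondegenerate in $\mathcal{L}$ (in the sense defined above) at every point of a nonempty set cut out by the non-vanishing of finitely many analytic minors, hence at $\lambda$-almost every point of $U$.

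Everything then hinges on whether the volume measure $\lambda$ on the affine subspace $\mathcal{L}$ is strongly extremal. If it is, Theorem \ref{thm:mult} applies directly to $\f\colon U\to\mathcal{L}$ and gives that $\f_*\lambda$ is strongly extremal; unwinding the definition, $\lambda$-almost every $x\in U$ has $\f(x)$ not VWMA, so $\f$ pushes a full-measure subset of $U$ into a set of non-VWMA points of $\mathcal{M}$, which therefore has positive measure (in fact full measure in this chart). So the whole proof reduces to showing that the hypothesis forces $\mathcal{L}$ to be strongly extremal.

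For this I would use the rigid dichotomy satisfied by affine subspaces. Write $\mathcal{L}$, after a permutation of the coordinates, as the graph $\{(\bx,\bx A+\ba):\bx\in\Q_p^d\}$ of an affine map, and let $\tilde A$ be the matrix obtained by augmenting $A$ with the row $\ba$. Then either the set of VWMA points of $\mathcal{L}$ is $\lambda$-null, or $\tilde A$ is itself very well multiplicatively approximable as a matrix; and in the latter case a single sequence of integer vectors $\tilde{\bq}^{(k)}=(q_0^{(k)},\bq^{(k)})$ witnessing the defect gives $|q_0^{(k)}+\bq^{(k)}\cdot\by|_p\le\Pi_{+}(\tilde{\bq}^{(k)})^{-(1+\varepsilon)}$ for a fixed $\varepsilon>0$ and \emph{every} $\by\in\mathcal{L}$, because the left-hand side then depends on $\by$ only through the constrained coordinates. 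Hence a non-strongly-extremal $\mathcal{L}$ would make $\by_0\in\mathcal{L}$ VWMA, contradicting the hypothesis; so $\mathcal{L}$ must be strongly extremal, and the previous paragraph finishes the argument. If $\mathcal{M}$ is moreover connected, the affine hull is the same in every chart, so one in fact gets that almost every point of $\mathcal{M}$ is not VWMA.

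The hard part is supplying this rigid dichotomy for affine subspaces in the present setting. The elementary direction ($\tilde A$ VWMA $\Rightarrow$ every point of $\mathcal{L}$ VWMA) is a short linear-algebra computation with the $p$-adic multiplicative quasinorm, but the converse --- that when $\tilde A$ is not VWMA one has $\lambda$-a.e.\ point of $\mathcal{L}$ not VWMA, so that no intermediate measure can occur --- is the multiplicative, $p$-adic analogue of Kleinbock's criterion for extremal affine subspaces, and I would obtain it by re-running the Kleinbock--Margulis dynamical estimates of \cite{DGpart1} with the quasinorm $\Pi_{+}$ in place of $\|\cdot\|_\infty$. The remaining ingredients --- passing to a chart, identifying the affine hull, checking nondegeneracy of $\f$ in $\mathcal{L}$, and invoking Theorem \ref{thm:mult} --- are routine localization.
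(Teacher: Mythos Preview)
Your proposal is correct in outline but takes a genuinely different route from the paper. The paper proves the dichotomy \emph{directly} on the analytic chart: it establishes the dynamical correspondence Lemma~\ref{dani_vwma} (a point $\by$ is VWMA iff $\delta(g_{\bt}u_{\by}\mathcal{D}^{n+1})\le p^{-\gamma t}$ for infinitely many $\bt\in\Z_+^{n+1}$ and some $0<\gamma<\tfrac{1}{n+1}$), and then re-runs the argument of Theorem~\ref{gamma} with the multiparameter flow $g_{\bt}$ in place of $g_t$, defining the analogous set $U_1$ and showing via quantitative nondivergence and Borel--Cantelli that $U_1$ has positive measure whenever it is nonempty. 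No affine hull enters.

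You instead factor through the affine hull $\mathcal{L}$ of the chart image and reduce to the inheritance statement Theorem~\ref{thm:mult}, pushing all the dynamical work into the ``rigid dichotomy'' for affine subspaces. This is a legitimate reorganisation --- and indeed the paper's final three-condition equivalence theorem (applied to the identity map on $\mathcal{L}$, which is good and nonplanar in itself) supplies exactly the dichotomy you need, without requiring the specific matrix criterion for $\tilde A$ you sketch. What your route buys is a cleaner separation of concerns (affine-subspace analysis versus transfer to nondegenerate submanifolds) and, as you note, an immediate upgrade to full measure on connected manifolds. What the paper's route buys is economy: it avoids passing through Theorem~\ref{thm:mult} and the affine-subspace dichotomy as intermediate black boxes, since both of those ultimately rest on the same Lemma~\ref{dani_vwma} plus quantitative nondivergence that the direct argument already invokes. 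In particular, your ``hard part'' is not avoided but relocated --- proving the $p$-adic multiplicative affine dichotomy requires precisely the dynamical input the paper uses directly.
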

 
The approach used in this paper uses homogeneous dynamics as introduced by Kleinbock and Margulis in their important paper \cite{KM}. Diophantine approximation on affine subspaces has seen several developments recently, we refer the reader to \cite{BBDD, Kleinbock-extremal, Kleinbock-exponent, G1, G-thesis, G-div, G-mult, G-monat, GM, GG} and \cite{G-handbook} for a survey. Likewise, Diophantine approximation on manifolds in the $p$-adic setting has been studied in \cite{KT} and subsequently in \cite{BK, MoS1, MoS2, DG, DGpart1}.


\section{Quantitative nondivergence}
We will use quantitative nondivergence estimates for certain flows on homogeneous spaces. This estimate has its origin in the influential work of Kleinbock and Margulis \cite{KM}. We refer the reader to the recent survey \cite{BKsurvey} for instances of the ubiquity of quantitative nondivergence. In the context of the present paper, the most relevant developments are an $S$-adic version of quantitative nondivergence developed by Kleinbock and Tomanov \cite{KT} and an improved estimate, crucial to Diophantine applications, developed by Kleinbock in \cite{Kleinbock-exponent}. A $p$-adic version of this estimate was used in our earlier work \cite{DGpart1} and will also play a central role in the present paper. We recall some notation and definitions and state Theorem 5.3 from \cite{DGpart1}, a $p$-adic version of D. Kleinbock's improved quantitative nondivergence theorem from \cite{Kleinbock-exponent}. This theorem more or less follows from \cite{Kleinbock-exponent} by adapting the necessary changes from \cite{KT}. For completeness, a proof is provided in \cite{DGpart1}.

We need some definitions and notation and follow \cite{KT} in our exposition.
A metric space $X$ is called \emph{Besicovitch} \cite{KT} if there exists a constant $N_X$ such that the following holds: for any bounded subset $A$ of $X$ and for any family $\mathcal{B}$ of nonempty open balls in $X$ such that
$$ \forall x \in A \text{ is a center of some ball of } B,$$
there is a finite or countable subfamily $\{B_i\}$ of $B$ with
$$ 1_A \leq \sum_{i}1_{B_i} \leq N_X. $$

\noindent  We now define $D$-Federer measures following \cite{KLW}. Let $\mu$ be a Radon measure on $X$, and $U$ an open subset of $X$ with $\mu(U) > 0$. We  say that $\mu$ is $D$-Federer on $U$ if
$$ \sup_{\substack{x \in \supp \mu, r > 0\\ B(x, 3r) \subset U}} \frac{\mu(B(x, 3r))}{\mu(x,r)} < D.$$
Finally, we say that $\mu$ as above is Federer if for $\mu$-a.e. $x \in X$ there exists a neighbourhood $U$ of $x$ and $D > 0$ such that $\mu$ is $D$-Federer on $U$. We refer the reader to \cite{KLW, KT} for examples of Federer measures.\\
Following, \cite{Kleinbock-exponent}, for a subset $M$ of $\Q_{p}^n$, define its affine span $\langle M\rangle_a$ to be the intersection of all affine subspaces of $\Q_{p}^n$ containing $M$. Let $X$ be a metric space, $\mu$ a Borel measure on $X$, $\cL$ an affine subspace of  $\Q_{p}^n$ and $f$ a map from $X$ into $\cL$. Say that $(f, \mu)$ is nonplanar in $\cL$ if
$$\cL = \langle f (B \cap \supp \mu)\rangle_a \forall \text{ nonempty open } B \text{ with } \mu(B) > 0.$$

For a subset $U$ of $X$ and $C, \alpha > 0$, say that a Borel measurable function $f : U \to \Q_p$ is $(C, \alpha)$-good on $U$ with respect to $\mu$ if for any open ball $B \subset U$ centered in $\supp \mu$ and $\varepsilon > 0$ one has
\begin{equation}\label{gooddef}
\mu \left(\{ x \in B \big| |f(x)| < \varepsilon \} \right) \leq
C\left(\displaystyle \frac{\varepsilon}{\sup_{x \in
B}|f(x)|}\right)^{\al}|B|.
\end{equation}

Where $\|f\|_{\mu, B} = \sup \{c : \mu(\{x \in B : |f(x)| > c\}) > 0\}$.\\

Let $S = \{\infty, p\}$,  $\Q_S:= \Q_p \times \R$,  $\mathcal{D} := \mathbb{Z}[1/p]$ and
$$ \mathcal{P}(\mathcal{D}, m):= \text{ the set of all nonzero primitive submodules of } \mathcal{D}^{m}.$$

A vector $ \bx$ of $\Q^{m}_S$ will be denoted as $\bx = (x^{(\infty)}, x^{(p)})$, where $x^{(v)} = (x^{(v)}_1, . . . , x^{(v)}_m) \in \Q_v^m$. The norm $\|\bx\|$ and the content $c(\bx)$ of $\bx$ are defined to be the maximum (resp., the product) of all the numbers $\|(x^{(v)})\|_v$.

For a discrete $\mathcal{D}$-submodule $\Lambda$ of $\Q^{m}_S$, we set
$$\delta(\Lambda):= \min\{c(\bx) : \bx \in \Lambda\backslash \{0\}\},$$
The covolume, cov() of a lattice used below is defined as in \S 8.3 of \cite{KT}.

We recall Theorem $5.3$ from \cite{DGpart1}.

\begin{theorem}\label{QND2}
		Let $X$ be a 
		be a Besicovitch metric space,
		$\mu$ a  uniformly Federer measure  on $X$, and let
		$S$
		be as above.  For  $m\in \N$, let  a
		ball $B = B(x_0,r_0)\subset X$ and a continuous map $h:\tilde B
		\to \GL(m,\Q_S)$ be given, where $\tilde B$ stands for
		$B(x_0,3^mr_0)$.
		Now suppose that for some
		$C,\alpha > 0$ and $0 < \rho  < 1$  one has
		
		{\label{rm1}\rm(i)} for every $\,\Delta\in \mathcal P(\mathcal{D},m)$, the function
		$\cov\big(h(\cdot)\Delta\big)$ is $(C,\alpha)$ good \ on $\tilde B$  with
		respect to $\mu$;
		
		{\label{rm2}\rm(ii)}  for every $\,\Delta\in \mathcal P(\mathcal{D},m)$,
		$\sup_{x\in B\cap \supp\mu}\cov\big(h(x)\Delta\big) \ge \rho^{rk(\Delta)}$.
		
		Then 
		for any  positive $ \varepsilon\le
		\rho$ one has
		$$
		\mu\left(\big\{x\in B\bigm| \delta \big(h(x)\mathcal{D}^m\big) < \varepsilon
		\big\}\right)\le mC \big(N_{X}D_{\mu}^2\big)^m
		\left(\frac\varepsilon \rho \right)^\alpha \mu(B)\
		$$
	\end{theorem}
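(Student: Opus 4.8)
The plan is to prove this by the inductive scheme of Kleinbock and Margulis \cite{KM}, in the $S$-arithmetic form of Kleinbock and Tomanov \cite{KT} and with the rank-sensitive refinement of Kleinbock \cite{Kleinbock-exponent}. First I would reduce to a local estimate. Write $E:=\{x\in B:\delta(h(x)\mathcal D^m)<\varepsilon\}$ and note that for each $x\in E$ there is a primitive vector $\bw\in\mathcal D^m$, equivalently a rank-one $\Delta\in\mathcal P(\mathcal D,m)$, with $\cov(h(x)\Delta)=c(h(x)\bw)<\varepsilon\le\rho$. Since $X$ is Besicovitch, $E\cap\supp\mu$ can be covered by a subfamily $\{B_i\}$ of multiplicity $\le N_X$, with $B_i=B(x_i,r_i)$ centred in $E\cap\supp\mu$ and $r_i$ chosen \emph{maximal} subject to a flag-following condition: on $B_i$ a suitable increasing chain of primitive submodules keeps covolume strictly below $\rho^{\rk}$, while any larger radius destroys this for some $\Delta$ in the chain, whereupon hypothesis (ii) applied to that $\Delta$ on a fixed dilate $B_i'\subset\tilde B$ of $B_i$ forces $\sup_{B_i'\cap\supp\mu}\cov(h(\cdot)\Delta)\ge\rho^{\rk(\Delta)}$. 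This ``maximal ball'' device is exactly what turns the global hypothesis (ii) into usable local information, and its bookkeeping is where the Federer constant enters, through comparisons between $\mu(B_i)$ and $\mu(B_i')$.

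The combinatorial heart is the claim that for each $x$ the collection $\{\Delta\in\mathcal P(\mathcal D,m):\cov(h(x)\Delta)<\rho^{\rk(\Delta)}\}$ is totally ordered by inclusion, so that on $B_i$ the smallness of $\delta(h(x)\mathcal D^m)$ is witnessed by a flag of length $\le m$. I would derive this from the $p$-adic geometry-of-numbers estimate
\[
\cov\big(h(x)(\Delta\cap\Delta')\big)\,\cov\big(h(x)(\Delta+\Delta')\big)\ \le\ c_m\,\cov\big(h(x)\Delta\big)\,\cov\big(h(x)\Delta'\big),
\]
where $c_m$ depends only on $m$ (it is $1$ over $\R$; the $S$-adic version is the corresponding lemma in \cite{KT}). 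Because $\rk(\Delta\cap\Delta')+\rk(\Delta+\Delta')=\rk\Delta+\rk\Delta'$, the powers of $\rho$ on the two sides match, so if $\Delta,\Delta'$ both lie in the collection and neither contains the other the inequality is violated once $\rho$ is taken small enough to absorb $c_m$ (a harmless shrinking of $\rho$). Granting the flag structure, on each $B_i$ I would induct on rank: for $\Delta$ in the flag, $\cov(h(\cdot)\Delta)$ is $(C,\alpha)$-good on $\tilde B$ by (i), its supremum over $B_i'$ is $\ge\rho^{\rk(\Delta)}\ge\rho$ by the maximality of $r_i$, and the subset of $B_i$ on which $\Delta$ witnesses $\delta(h(x)\mathcal D^m)<\varepsilon$ lies in $\{x\in B_i':\cov(h(x)\Delta)<\varepsilon\}$; the $(C,\alpha)$-good inequality on $B_i'$, followed by the Federer property to return to $B_i$, bounds the $\mu$-measure of that subset by $C(\varepsilon/\rho)^\alpha D_\mu\,\mu(B_i)$.

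Summing over the $\le m$ ranks appearing in the flag on $B_i$, then over the Besicovitch subfamily (multiplicity $\le N_X$), and inserting one more Federer factor to pass from $\tilde B$-scale balls back to $B$, gives $\mu(E)\le mC(N_X D_\mu^2)^m(\varepsilon/\rho)^\alpha\mu(B)$, as claimed. The step I expect to be the main obstacle is the $p$-adic geometry of numbers underpinning all of this: proving the submodularity inequality for $\cov$ over $\Z[1/p]$ with an explicit constant $c_m$, and carrying out the scale-and-constant bookkeeping precisely enough that the final bound comes out in the clean form $mC(N_X D_\mu^2)^m(\varepsilon/\rho)^\alpha$. It is exactly here that the rank-weighted lower bound $\rho^{\rk(\Delta)}$ in (ii), rather than a flat $\rho$, is genuinely needed: it is what makes the $\rho$-exponents additive across intersections and sums of submodules, hence what makes the flag argument work.
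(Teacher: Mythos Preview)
The paper does not prove this theorem here; it is quoted as Theorem~5.3 of \cite{DGpart1}, where the proof is obtained by running Kleinbock's rank-weighted quantitative nondivergence argument from \cite{Kleinbock-exponent} inside the $S$-arithmetic framework of \cite{KT}. Your outline is exactly this approach---the Kleinbock--Margulis flag induction, the $S$-adic submodularity of $\cov$ (which in \cite{KT} holds with constant $1$, so no shrinking of $\rho$ is actually needed), and the Besicovitch/Federer bookkeeping---so there is nothing further to compare.
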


\section{Dichotomy: Proof of Theorem \ref{theorem-dicho}}\label{aevsno}

In this section, we address $p$-adic versions of D. Kleinbock's paper \cite{Kleinbock-dichotomy} where he proved that analytic manifolds possess a remarkable dichotomy with regard to certain Diophantine properties, see also \cite{CaoYu} and \cite{Mosh}. 
We begin with
\begin{lemma}\label{minkowski}
For any $\Delta\in \mathcal{P}(\mathcal{D}, n+1)$ and $g\in \GL(n+1, \Q_S)$ we have $$
\delta(g\mathcal{D}^{n+1})\leq \cov(g\Delta)^{\frac{1}{rk(\Delta)}}.$$
\end{lemma}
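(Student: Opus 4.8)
The plan is to combine the elementary monotonicity of $\delta$ under inclusion of $\mathcal{D}$-submodules with a Minkowski-type first-minimum estimate for the sublattice $g\Delta$.

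\emph{Reduction.} Since $\Delta\subseteq\mathcal{D}^{n+1}$ one has $g\Delta\subseteq g\mathcal{D}^{n+1}$, and the minimum defining $\delta(g\Delta)$ is taken over a subfamily of the competitors for $\delta(g\mathcal{D}^{n+1})$; hence $\delta(g\mathcal{D}^{n+1})\le\delta(g\Delta)$. Set $k:=\rk(\Delta)=\rk(g\Delta)$. As $\Delta$ is primitive, $g\Delta$ is a lattice in the $k$-dimensional $\Q_S$-subspace $W:=\Span_{\Q_S}(g\Delta)$ of $\Q_S^{n+1}$, so $\cov(g\Delta)$ is defined as in \cite[\S 8.3]{KT}. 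The lemma thus reduces to the clean Minkowski bound
\begin{equation}\label{eq:Mink}
\delta(\Lambda)\le\cov(\Lambda)^{1/k}
\end{equation}
valid for any rank-$k$ lattice $\Lambda$ contained in a $\Q_S$-subspace $W$ of some $\Q_S^m$, with $c$ the restriction of the ambient content.

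\emph{Minkowski step.} I would deduce \eqref{eq:Mink} from the $S$-arithmetic form of Minkowski's convex body theorem (cf.\ \cite[\S 8]{KT}). Write $W=W_p\times W_\infty$ with $W_p\subseteq\Q_p^m$, $W_\infty\subseteq\R^m$ and $\dim_\R W_\infty=k$, and for $t_p\in p^{\Z}$, $t_\infty>0$ consider the content box
\[
\Omega_{t_p,t_\infty}:=\bigl\{\bx\in W:\ \|x^{(p)}\|_p\le t_p,\ \|x^{(\infty)}\|_\infty\le t_\infty\bigr\},
\]
on which $c\le t_pt_\infty$. This set is compact and symmetric---an open-compact subgroup in the $p$-adic directions, a box (hence convex) in the $k$ real directions---so Minkowski's theorem produces a nonzero point of $\Lambda$ in $\Omega_{t_p,t_\infty}$ as soon as its Haar measure reaches $2^k\cov(\Lambda)$, the factor $2$ entering only through the archimedean directions. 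Choosing $t_p$ a power of $p$ and then $t_\infty$ so that the measure of $\Omega_{t_p,t_\infty}$ hits this threshold while $t_pt_\infty$ equals $\cov(\Lambda)^{1/k}$ (or is arbitrarily close to it from above), any nonzero $\bz\in\Omega_{t_p,t_\infty}\cap\Lambda$ satisfies $c(\bz)\le t_pt_\infty$, which yields \eqref{eq:Mink}. Equivalently, \eqref{eq:Mink} follows from the $S$-adic theorem on successive minima in the form $\prod_{i=1}^{k}\lambda_i(\Lambda)\le\cov(\Lambda)$, where $\lambda_i$ are the successive minima of the content on $\Lambda$, via $\delta(\Lambda)=\lambda_1(\Lambda)\le\bigl(\prod_i\lambda_i(\Lambda)\bigr)^{1/k}$.

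\emph{Main obstacle.} The delicate point is the normalization bookkeeping that produces \eqref{eq:Mink} with exponent $1/k$ and \emph{no} extra constant. The factor $2^k$ in the Minkowski threshold must be exactly absorbed by the normalization of the covolume in \cite[\S 8.3]{KT}, where the Haar measure is chosen with $\cov(\mathcal{D}^m)=1$, so that on the archimedean side the cube $[-1,1]^k$ has measure $2^k$ while on the $p$-adic side the balls are subgroups and carry no slack; this is consistent with the sharp case $\Lambda=\mathcal{D}^k$, where \eqref{eq:Mink} is an equality---for instance $\delta(\mathcal{D})=\min_{0\ne d\in\mathcal{D}}|d|_p|d|_\infty=1=\cov(\mathcal{D})$. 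Carrying out the measure computation for $\Omega_{t_p,t_\infty}$ in a general subspace $W\subseteq\Q_S^m$, rather than in $\Q_S^k$, is exactly where one leans on the definition of $\cov$ in \cite[\S 8.3]{KT}. Finally, to handle the borderline case and to exhibit the bound as an attained minimum one notes that the relevant boxes all lie inside a single fixed compact set, which meets $\Lambda$ in only finitely many points; alternatively one works with the strict Minkowski inequality and lets $t_pt_\infty\downarrow\cov(\Lambda)^{1/k}$. Apart from these checks the argument is routine.
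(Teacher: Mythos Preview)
Your proposal is correct and follows essentially the same approach as the paper: both apply Minkowski's convex body theorem to the sublattice $g\Delta$ inside its $\Q_S$-span to produce a nonzero vector of small content, which then witnesses the bound on $\delta(g\mathcal{D}^{n+1})$. The paper is slightly more concrete---it fixes $t_p=1$, $t_\infty=\cov(g\Delta)^{1/j}$, and explicitly constructs the orthonormal/$\Z_p$-adapted basis of $\Q_S g\Delta$ to read off $\lambda_S(D_1)\ge 2^j\cov(g\Delta)$---whereas you phrase the same computation via the general family $\Omega_{t_p,t_\infty}$ and a separate monotonicity step $\delta(g\mathcal{D}^{n+1})\le\delta(g\Delta)$; but the content is identical.
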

\begin{proof}
	Note that $g\Delta$ is a lattice in $\Q_S g\Delta$. Set $j = \rk(\Delta)$. Now consider the ball \begin{align*}
	D&=D_\infty\times D_p\\
	&=\left\{ x^{(\infty)}\in\R^{n+1}\left| ||x^{(\infty)}||_\infty\leq (\cov(g\Delta))^{\frac{1}{j}}\right.\right\}\times
	\left\{ x^{(p)}\in\Q_p^{n+1}\left| \begin{aligned}
	&|x^{(p)}_1|_p\leq 1\\
	&|x^{(p)}_2|_p\leq 1\\
	&\vdots \\
	&|x^{(p)}_{n+1}|_p\leq 1
	\end{aligned}\right.\
	\right	\}.
	\end{align*} Denote $ D\cap \Q_Sg\Delta=D_1$, a ball in $\Q_Sg\Delta$.  The normalized Haar measure on $\Q_S\Delta_1$ where $\Delta_1=g\Delta$ is defined as
	$\lambda_S(D_1)=\mu_S(\pi\inv(D_1))$ where $\mu_S$ is the Haar measure on $\Q_S^j$ and $\pi:\Q_S^j\mapsto\Q_S\Delta_1=\Q_Sv_1+\cdots+\Q_Sv_j$ and $v_1,v_2,\cdots,v_j$ are taken such that $v_1^{\infty},\cdots,v_j^{\infty}$ form an orthonormal basis of $(\Q_S\Delta_1)_\infty$ and $(\Q_S\Delta_1)_p\cap \Z_p^{n+1}=\Z_pv_1^{(p)}+\cdots+\Z_pv_j^{(p)}$. So $$
	\lambda_S(D_1)\geq 2^j \cov(g\Delta).$$ Hence by Minkowski's theorem there exists $g\gamma\in g\Delta$ i.e. $\gamma\in\mathcal{D}^{n+1}$ such that 
	$$\begin{aligned}&
	\Vert (g\gamma)^{(\infty)}\Vert_\infty \leq  cov(g\Delta)^{\frac{1}{j}}\\
	&\Vert (g\gamma)^{(p)}\Vert_p\leq 1.
\end{aligned}
$$ Therefore $c(g\gamma)\leq \cov(g\Delta)^{\frac{1}{rk(\Delta)}}\implies \delta(g\mathcal{D}^{n+1})\leq \cov(g\Delta)^{\frac{1}{rk(\Delta)}}. $
	\end{proof}

\begin{proposition}
	Let $U$ be an open subset of $\Q_p^d$ and let $\mathcal{F}$ be a finite-dimensional space of analytic $\Q_p$ valued functions on $U$. Then for any $\bx\in U$ there exists $C,\alpha>0$ and a neighbourhood $W$ of $\bx$ such that every element of $\mathcal{F}$ is $(C, \alpha)$-good on $W$. 
\end{proposition}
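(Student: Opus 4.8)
The plan is to deduce the statement from the polynomial case, namely that a polynomial of degree at most $k$ on $\Q_p^d$ is $(C_0,\al_0)$-good with $C_0,\al_0$ depending only on $d$ and $k$ (part of the machinery of \cite{KT}), by approximating the elements of $\mathcal F$ by their Taylor polynomials uniformly over $\mathcal F$. First I would normalise: after a translation assume $\bx=0$ and fix $R_0>0$ with $B(0,R_0)\subset U$ on which every $f\in\mathcal F$ is a convergent power series. Since $\Q_p$-analytic functions are determined by their expansion at any point and $\dim\mathcal F<\infty$, the sup-norms $\|f\|_r:=\sup_{B(0,r)}|f|$, $0<r\le R_0$, are mutually equivalent norms on $\mathcal F$; let $M\ge 1$ compare $\|\cdot\|_{R_0}$ with $\|\cdot\|_r$ for the radius $r$ of the neighbourhood $W:=B(0,r)$ to be fixed. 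By the non-archimedean maximum modulus principle — the Gauss norm of a convergent series equals its supremum over the polydisc and is unchanged under re-expansion — the Taylor coefficients $a_{\mathbf n}(f;\bx_0)$ of $f$ at any $\bx_0\in W$ satisfy $|a_{\mathbf n}(f;\bx_0)|\le\|f\|_{R_0}R_0^{-|\mathbf n|}$, so for every sub-ball $B\subseteq W$ of radius $\rho$ and every $k$,
\[
\sup_B\bigl|\,f-T_k^{\bx_B}f\,\bigr|\ \le\ \|f\|_{R_0}\,(\rho/R_0)^{k+1}\ \le\ M\,(\rho/R_0)^{k+1}\,\|f\|_W ,
\]
where $T_k^{\bx_B}f$ denotes the Taylor polynomial of $f$ of degree at most $k$ at the centre $\bx_B$ of $B$.

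Two uniformities drive the argument. The first is a uniform bound $\ell=\ell(\mathcal F,W)$ on the order of vanishing of nonzero elements of $\mathcal F$ on $W$: the sets $Z_m:=\{\bx_0\in\overline W:\ \exists\,f\in\mathcal F\setminus\{0\}\ \text{with}\ \mathrm{ord}_{\bx_0}f\ge m\}$ are closed (cut out by analytic equations in $\bx_0$, namely the vanishing of the maximal minors of the Taylor-coefficient matrix), nested, and have empty total intersection — a sequence $f_m\in\mathcal F$ with $\|f_m\|_W=1$ and $\mathrm{ord}_{\bx_0}f_m\ge m$ has, by compactness of the unit sphere of $\mathcal F$, a subsequential limit $f_*\ne 0$ all of whose Taylor coefficients at $\bx_0$ vanish, forcing $f_*\equiv 0$, a contradiction — so by compactness of $\overline W$ some $Z_\ell$ is empty. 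The second uniformity, proved the same way, is a constant $c_0>0$ with $\max_{|\mathbf n|<\ell}|a_{\mathbf n}(f;\bx_0)|\ge c_0\|f\|_W$ for all $f\in\mathcal F$ and $\bx_0\in W$ (this maximum is continuous in $(f,\bx_0)$ and, since $\mathrm{ord}_{\bx_0}f<\ell$, strictly positive on the compact set obtained as the unit sphere of $\mathcal F$ times $\overline W$). Feeding these into the Cauchy bound and choosing $k$ large enough (depending on $\ell,M,r,R_0$), one obtains, for all $f\in\mathcal F\setminus\{0\}$ and all sub-balls $B\subseteq W$ of radius $\rho$,
\[
\sup_B|f|\ \ge\ \tfrac{c_0}{2}\,\rho^{\,\ell-1}\,\|f\|_W
\qquad\text{and}\qquad
\sup_B\bigl|\,f-T_k^{\bx_B}f\,\bigr|\ \le\ \tfrac12\sup_B|f| .
\]

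It remains to conclude and to identify the main obstacle. The second inequality says that $|f|$ and $|T_k^{\bx_B}f|$ agree up to a factor $2$ in sup-norm on every sub-ball, which, combined with the $(C_0,\al_0)$-goodness of the polynomial $T_k^{\bx_B}f$, immediately yields the desired bound on $\{x\in B:|f(x)|<\ep\}$ when $\ep$ is comparable to $\sup_B|f|$. The genuine difficulty is the regime of very small $\ep$, where this sublevel set is governed not by any single Taylor polynomial but by the zero set of $f$ itself. Here I would iterate the estimate over a geometric sequence of scales, stopping at radius $\asymp(\ep/\|f\|_W)^{1/(\ell-1)}$: by the first inequality above, at that scale every sub-ball $B'$ has $\sup_{B'}|f|\gtrsim\ep$, so the comparison with $T_k^{\bx_{B'}}f$ applies on each $B'$, and one then sums the contributions of the (few, because $\mathrm{ord}f<\ell$) sub-balls meeting the zero set. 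Equivalently, and probably more cleanly, one reduces to $d=1$ by a Fubini/slicing argument as in the treatment of polynomials in \cite{KT}, where Weierstrass preparation writes $|f|$, on a small enough ball, as a nonzero constant times $|P|$ for a polynomial $P$ of uniformly bounded degree, so that $(C,\al)$-goodness of $f$ is immediate from that of $P$. I expect this passage — the small-scale bookkeeping, or equivalently the reduction to one variable — to be the only substantial point; the constants $C,\al$ and the neighbourhood $W$ produced this way depend only on $\mathcal F$ and $d$, as required. This is, mutatis mutandis, Kleinbock's argument for real-analytic functions in \cite{Kleinbock-dichotomy}.
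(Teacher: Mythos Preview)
Your approach is sound in outline but takes a substantially longer route than the paper. The paper's proof is three lines: enlarge $\mathcal F$ to contain the constants, pick a basis $1,f_1,\dots,f_n$, and note that the analytic map $\f=(f_1,\dots,f_n):U\to\Q_p^n$ is nonplanar (an affine relation $c_0+\sum c_if_i\equiv 0$ would contradict linear independence of the basis). For analytic maps nonplanarity is equivalent to nondegeneracy, so Proposition~4.2 of \cite{KT} applies directly and furnishes $W,C,\al$ such that every linear combination of $1,f_1,\dots,f_n$ --- i.e.\ every element of $\mathcal F$ --- is $(C,\al)$-good on $W$.

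What you propose is, in effect, a self-contained re-derivation of the relevant case of that proposition (equivalently, of its real-analytic predecessor in \cite{Kleinbock-dichotomy}): uniform Taylor approximation plus a compactness argument bounding the order of vanishing across $\mathcal F$, then transfer of goodness from polynomials. This is correct in spirit and makes the uniformity over $\mathcal F$ explicit, at the price of considerable length. The step you correctly flag as the crux --- the small-$\ep$ regime --- is standard but your iteration sketch is slightly loose: to compare $\{|f|<\ep\}$ with a polynomial sublevel set on a sub-ball $B'$ you need the Taylor remainder there to be $\lesssim\ep$, which forces a stopping radius of order $\ep^{1/(k+1)}$ rather than $\ep^{1/(\ell-1)}$; the lower bound $\sup_{B'}|f|\gtrsim\rho'^{\ell-1}$ then enters separately to control $\sup_{B'}|f|$ from below, and the two scales must be reconciled (this is why one needs $k\ge\ell-1$). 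The Weierstrass-preparation alternative you mention is indeed cleaner in the ultrametric setting. Either way, all of this machinery is already packaged inside the single citation of \cite{KT} that the paper invokes.
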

\begin{proof}
	 Without loss of generality we may assume that  $\mathcal{F}$ contains constant functions. Let $1,f_1,\cdots,f_n$ be a basis of $\mathcal{F}$. Then $\f=(f_1,f_2,\cdots,f_n)$ is nonplanar (cf. \cite{KT}). For analytic functions nondegeneracy is equivalent to nonplanarity. Therefore, the conclusion follows from Proposition 4.2 of \cite{KT}.
\end{proof}

The Corollary below now follows from the expression of $\cov(g_tu_\f(\bx)\Delta)$ in (6.12) from \cite{DGpart1} as a 
maximum of norms of linear combinations of $f_i$'s.
\begin{corollary}\label{good_neighbourhood}
	Let $U$ be an open subset of $\Q_p^d$, and let $\f: U\mapsto \Q_p^n$ be an analytic map. Then for any $\bx_0\in U$ there exists $C, \alpha>0$ and a neighbourhood $W$ of $\bx_0$ contained in $U$ such that for any $\Delta\in\mathcal{P}(\mathcal{D},n+1)$ and $t\in N$ the functions $\bx\mapsto \cov(g_tu_{\f(\bx)}\Delta)$ are $(C,\alpha)$-good on $W$. 
\end{corollary}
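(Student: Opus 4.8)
The plan is to read off the corollary from the Proposition together with the explicit shape of the covolume given by formula (6.12) of \cite{DGpart1}. Write $\f=(f_1,\dots,f_n)$ and put $\mathcal{F}:=\Span_{\Q_p}\{1,f_1,\dots,f_n\}$, a finite-dimensional space of analytic $\Q_p$-valued functions on $U$. The first step is to apply the preceding Proposition to this $\mathcal{F}$ at the point $\bx_0$: it produces constants $C\geq 1$ and $\al>0$ and a neighbourhood $W\subseteq U$ of $\bx_0$ such that \emph{every} function in $\mathcal{F}$ is $(C,\al)$-good on $W$.

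The second step is to invoke (6.12) of \cite{DGpart1}. Fix $\Delta\in\mathcal{P}(\mathcal{D},n+1)$, of rank $j=\rk(\Delta)$, and fix $t$. That formula writes $\cov(g_tu_{\f(\bx)}\Delta)$ as the maximum of finitely many functions of $\bx$, each of which is either a nonnegative constant (coming from the archimedean factor of the content, and from those Pl\"ucker coordinates left fixed by $g_tu_{\f(\bx)}$) or of the form $|L_I(\bx)|_p$ with $L_I=L_I^{\Delta,t}$ a $\Q_p$-linear combination of $1,f_1,\dots,f_n$, hence an element of $\mathcal{F}$. The reason the coordinates are \emph{linear} in the $f_i$ and not of higher degree is that $u_{\f(\bx)}$ is unipotent, so the induced action on the exterior power $\wedge^{j}(\Q_p^{n+1})$ sends each Pl\"ucker coordinate to an affine-linear combination of $1,f_1(\bx),\dots,f_n(\bx)$; the diagonal $g_t$ only rescales coordinates.

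The third step is to combine the first two using the stability of the $(C,\al)$-good class under two trivial operations. Each $|L_I^{\Delta,t}|_p$ is $(C,\al)$-good on $W$ by the first step, and every nonnegative constant function is $(C,\al)$-good because $C\geq 1$. Moreover, a pointwise maximum of finitely many $(C,\al)$-good functions $\phi_1,\dots,\phi_N$ is again $(C,\al)$-good: for any ball $B\subseteq W$ and any $\ep>0$, choosing $i_0$ with $\sup_{\bx\in B}|\phi_{i_0}(\bx)|=\max_i\sup_{\bx\in B}|\phi_i(\bx)|$ we have $\{\bx\in B:\max_i|\phi_i(\bx)|<\ep\}\subseteq\{\bx\in B:|\phi_{i_0}(\bx)|<\ep\}$ while $\sup_{\bx\in B}\max_i|\phi_i(\bx)|=\sup_{\bx\in B}|\phi_{i_0}(\bx)|$, so the $(C,\al)$-good bound for $\phi_{i_0}$ transfers verbatim to $\max_i|\phi_i|$. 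Applying this with the $\phi$'s equal to the constants and the $|L_I^{\Delta,t}|_p$ from step two shows that $\bx\mapsto\cov(g_tu_{\f(\bx)}\Delta)$ is $(C,\al)$-good on $W$.

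The one thing to be careful about — and, I expect, the only real obstacle — is the uniformity of $C$ and $\al$ in $\Delta$ and $t$. This is automatic: however $\Delta$ and $t$ vary, every linear functional $L_I^{\Delta,t}$ occurring in (6.12) lies in the \emph{single} fixed finite-dimensional space $\mathcal{F}$, and the Proposition delivers one pair $(C,\al)$ valid for all of $\mathcal{F}$ on $W$ at once. Everything else is bookkeeping; the substantive input is the reduction provided by (6.12), which we take as given.
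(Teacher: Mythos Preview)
Your proposal is correct and follows precisely the route the paper indicates: the paper's ``proof'' is the single sentence preceding the corollary, namely that the result follows from the expression of $\cov(g_tu_{\f(\bx)}\Delta)$ in (6.12) of \cite{DGpart1} as a maximum of norms of linear combinations of the $f_i$'s, and you have simply unpacked this into the three natural steps (apply the Proposition to $\mathcal{F}=\Span\{1,f_1,\dots,f_n\}$, invoke (6.12), use closure of the $(C,\alpha)$-good class under max and scaling). Your remark on uniformity in $(\Delta,t)$ is exactly the point, and your explanation of why only linear combinations appear is a welcome clarification the paper omits.
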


Define $$\gamma(\by):=\sup\{c\geq 0\left | \delta(g_tu_{\by}\mathcal{D}^{n+1})\leq p^{-ct} \text{ for infinitely many } t\in\N \right \}$$ for $\by\in \Q_p^n$.
From Proposition 4.1 and Lemma 6.1 of \cite{DGpart1}  we can conclude that
\begin{equation}\label{gamma_w}
w_{p}(\by)=\frac {n(1+\gamma(\by))+\gamma(\by)}{1-(n+1)\gamma(\by)}.
\end{equation}
We are now ready for
\begin{theorem}\label{gamma}
	Suppose $\f:U \mapsto \Q_p^n $ is an analytic map and $U$ is an open subset of $\Q_p^d$. Denote by $\lambda$ the Haar measure on $\Q_p^d$. Let $\gamma\geq0$ and $\bx_0\in U$ be such that $\gamma(f(\bx_0))\leq \gamma$, then $\lambda$-almost every $\bx$ in a neighbourhood of $\bx_0$, we have $\gamma(\f(\bx))\leq \gamma$. 
\end{theorem}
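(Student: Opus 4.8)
The plan is to deduce Theorem~\ref{gamma} from the quantitative nondivergence estimate Theorem~\ref{QND2} via a Borel--Cantelli argument, in the spirit of \cite{Kleinbock-dichotomy}. First I would fix a ball $B = B(\bx_0, r_0)$ small enough that its dilate $\tilde B = B(\bx_0, 3^{n+1}r_0)$ lies inside the neighbourhood $W$ of $\bx_0$ provided by Corollary~\ref{good_neighbourhood}; then the maps $\bx \mapsto \cov(g_t u_{\f(\bx)}\Delta)$ are $(C,\alpha)$-good on $\tilde B$ with respect to $\lambda$, with $C,\alpha$ independent of $\Delta \in \mathcal{P}(\mathcal{D},n+1)$ and $t \in \N$. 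Since $\Q_p^d$ is a Besicovitch metric space and Haar measure $\lambda$ is (uniformly) Federer, Theorem~\ref{QND2} applies with $X = \Q_p^d$, $\mu = \lambda$, $m = n+1$ and the continuous map $h_t := g_t u_{\f(\cdot)} \colon \tilde B \to \GL(n+1,\Q_S)$. For $c \in \bbQ$ with $c > \gamma$ set $E_t(c) := \{\bx \in B : \delta(g_t u_{\f(\bx)}\mathcal{D}^{n+1}) < p^{-ct}\}$. Because $\gamma(\f(\bx)) = \sup\{c \ge 0 \mid \delta(g_t u_{\f(\bx)}\mathcal{D}^{n+1}) \le p^{-ct}\ \text{for infinitely many}\ t \in \N\}$, one has $\{\bx \in B : \gamma(\f(\bx)) > \gamma\} \subset \bigcup_{c \in \bbQ,\, c > \gamma}\limsup_{t \to \infty} E_t(c)$, so it is enough to prove $\lambda(\limsup_t E_t(c)) = 0$ for each fixed such $c$.

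Fix such a $c$ and choose $c'$ with $\gamma < c' < c$. The assumption $\gamma(\f(\bx_0)) \le \gamma < c'$ means there is $t_0$ with $\delta(g_t u_{\f(\bx_0)}\mathcal{D}^{n+1}) > p^{-c't}$ for every $t \ge t_0$. For such $t$ and every $\Delta \in \mathcal{P}(\mathcal{D},n+1)$, Lemma~\ref{minkowski} gives
$$\cov(g_t u_{\f(\bx_0)}\Delta) \ \ge\ \delta(g_t u_{\f(\bx_0)}\mathcal{D}^{n+1})^{\rk(\Delta)}\ >\ p^{-c't\,\rk(\Delta)},$$
and since $\bx_0 \in B = B \cap \supp\lambda$, hypothesis (ii) of Theorem~\ref{QND2} holds with $\rho := p^{-c't} \in (0,1)$. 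Hypothesis (i) holds by the choice of $B$, and $\varepsilon := p^{-ct} \le \rho$ since $c > c'$. Hence Theorem~\ref{QND2} yields, for all $t \ge t_0$,
$$\lambda(E_t(c)) \ \le\ (n+1)\, C\, \big(N_{\Q_p^d} D_\lambda^2\big)^{n+1}\, (\varepsilon/\rho)^\alpha\, \lambda(B)\ =\ (n+1)\, C\, \big(N_{\Q_p^d} D_\lambda^2\big)^{n+1}\, p^{-(c-c')\alpha t}\, \lambda(B),$$
a quantity summable over $t$.

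The Borel--Cantelli lemma then gives $\lambda(\limsup_t E_t(c)) = 0$ for every rational $c > \gamma$, hence $\lambda(\{\bx \in B : \gamma(\f(\bx)) > \gamma\}) = 0$; equivalently, for $\lambda$-a.e.\ $\bx \in B$ and every rational $c > \gamma$ one has $\delta(g_t u_{\f(\bx)}\mathcal{D}^{n+1}) \ge p^{-ct}$ for all sufficiently large $t$, so $\gamma(\f(\bx)) \le \gamma$. Since $B$ is a neighbourhood of $\bx_0$, this is the assertion of Theorem~\ref{gamma}.

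I expect the only step requiring real care to be the verification of hypothesis (ii) of Theorem~\ref{QND2}: this is exactly where the single-point hypothesis $\gamma(\f(\bx_0)) \le \gamma$ enters, and Lemma~\ref{minkowski} is precisely the device that converts the lower bound on $\delta$ along the trajectory through $\f(\bx_0)$ into the lower bounds on $\cov(\,\cdot\,\Delta)$ needed to start the nondivergence machine. The genuinely analytic input — uniform $(C,\alpha)$-goodness of $\bx \mapsto \cov(g_t u_{\f(\bx)}\Delta)$ near $\bx_0$ — is already supplied by Corollary~\ref{good_neighbourhood}; the remainder (shrinking $B$ so that $\tilde B \subset W$, the standard facts that $\Q_p^d$ is Besicovitch and $\lambda$ is uniformly Federer, and discarding the finitely many $t < t_0$ harmlessly inside the $\limsup$) is bookkeeping.
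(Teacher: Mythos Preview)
Your proof is correct and follows essentially the same route as the paper: Corollary~\ref{good_neighbourhood} for condition (i), Lemma~\ref{minkowski} applied at the distinguished point to get condition (ii) with $\rho=p^{-c't}$, then Theorem~\ref{QND2} with $\varepsilon=p^{-ct}$ and Borel--Cantelli. The only cosmetic difference is that the paper packages the argument in the $U_1$, $U_2$ closure formalism of \cite{Kleinbock-dichotomy} (establishing the slightly stronger statement $U_2=\bar U_1\cap U$), whereas you work directly with the ball around $\bx_0$; your streamlining is entirely adequate for the theorem as stated.
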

\begin{proof}
	Let $\gamma\geq 0$ and $\bx_0 \in U$ be such that $\gamma(\f(\bx_0))\leq \gamma$. We consider the sets $U_1:=\left\{\bx\in U ~| ~\gamma(\f(\bx))\leq \gamma\}\right.$  and 
	$$ U_2:= \{ \bx\in U~|~ \mu(B\setminus U_1)=0 \text{ for some neighbourhood } B \text{ of } \bx\}.$$
	 We claim that $U_2=\bar U_1\cap U$. So if $\bx_0\in U_1,$ then $\bx_0\in U_2,$ which implies that $U_2$ is a nonempty open set and the conclusion of the theorem follows.\\
	 \noindent
	 Now take a point $\bx_1\in \bar U_1\cap U$ and a ball $B$ of $\bx_1$ such that $\tilde{B} :=3^{d-1}B$ is inside a neighbourhood appearing in Corollary \ref{good_neighbourhood}. We want to apply Theorem \ref{QND2} to the function $\bx\mapsto g_tu_{\f(\bx)}$. The first  condition (\rm{i}) of the Theorem is satisfied by Corollary \ref{good_neighbourhood}. Since $\bx\in \bar U_1,$ there exists $\bx'\in B\cap U_1$ and this implies that $\gamma(\f(\bx'))\leq \gamma,$ which in turn implies that 
	 $$\forall \gamma'>\gamma, ~ \delta(g_tu_{\f(\bx')}\mathcal{D}^{n+1})\geq p^{-\gamma't} \text{ for all but finitely many } t\in\N.$$  
	 Now applying Lemma \ref{minkowski} we have that $$\cov(g_tu_{\f(\bx')}\Delta)^{\frac{1}{rk(\Delta)}}\geq p^{-\gamma't}$$ for all $\Delta\in\mathcal{P}(\mathcal{D}, n+1)$ and for all but finitely many $t\in \N$. Hence condition (\rm{ii}) of the Theorem is satisfied. Taking $\varepsilon=p^{-\gamma''t}$ where $\gamma''>\gamma'$ and applying Theorem \ref{QND2} we have $$
	 \lambda\left\{\bx\in B~| ~ \delta(g_tu_{\f(\bx)}\mathcal{D}^{n+1})< p^{-\gamma''t}\right\}\leq C'\left(\frac{p^{-\gamma''t}}{p^{-\gamma't}}\right)^\alpha\mu(B).$$
	 By the Borel-Cantelli lemma we immediately have that for $\mu$-a.e $\bx\in B$  we have that $$
	 \delta(g_tu_{\f(\bx)}\mathcal{D}^{n+1})\leq p^{-\gamma''t}$$ for infinitely many $t\in\N$. Hence by definition for $\lambda$-a.e $\bx\in B$,  $\gamma(\f(\bx))\leq \gamma$ as $\gamma'', \gamma'$ were taken arbitrary close to $\gamma$. This implies $\bx_1\in U_2$ giving $\bar U_1\cap U\subset U_2$ and clearly $U_2\subset \bar U_1$.
	 \end{proof}
 Now from formula \ref{gamma_w} we can conclude the following:
	 \begin{corollary}
	 	Suppose $\f:U \mapsto \Q_p^n $ is an analytic map and $U$ is an open subset of $\Q_p^d$. Let $v\geq n$ and $\bx_0\in U$ be such that $w_{p}(f(\bx_0))\leq v$ then for $\lambda$-almost every $\bx$ in a neighbourhood of $\bx_0$ we have $w_{p}(\f(\bx))\leq v$. 
	 \end{corollary}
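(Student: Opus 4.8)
The plan is to deduce the Corollary directly from Theorem~\ref{gamma} by transporting both the hypothesis and the conclusion through the algebraic relation (\ref{gamma_w}). The first step is to analyse the one–variable function
$$\Phi(\gamma)\;=\;\frac{n(1+\gamma)+\gamma}{1-(n+1)\gamma}\;=\;\frac{n+(n+1)\gamma}{1-(n+1)\gamma}$$
appearing on the right of (\ref{gamma_w}). Writing $a=(n+1)\gamma$, it equals $\frac{n+a}{1-a}$, whose derivative $\frac{n+1}{(1-a)^{2}}$ is positive, so $\Phi$ is a strictly increasing bijection from $[0,\tfrac{1}{n+1})$ onto $[n,\infty)$, with the understanding that $w_{p}(\by)=\infty$ precisely when $\gamma(\by)\ge\tfrac{1}{n+1}$. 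Solving $\Phi(\gamma)=v$ gives $\gamma=\gamma_v:=\frac{v-n}{(n+1)(v+1)}$, and since $v\ge n$ one has $0\le\gamma_v<\tfrac{1}{n+1}$, so $\gamma_v$ is an admissible value of the parameter in Theorem~\ref{gamma}. Monotonicity of $\Phi$ together with (\ref{gamma_w}) then yields the clean equivalence: for every $\by\in\Q_p^{n}$, $w_{p}(\by)\le v$ if and only if $\gamma(\by)\le\gamma_v$.

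With this in hand the deduction is immediate. The hypothesis $w_{p}(\f(\bx_0))\le v$ becomes $\gamma(\f(\bx_0))\le\gamma_v$. Applying Theorem~\ref{gamma} with this choice of $\gamma$ produces a neighbourhood of $\bx_0$ in $U$ on which $\gamma(\f(\bx))\le\gamma_v$ for $\lambda$-almost every $\bx$, and running the equivalence of the first step backwards gives $w_{p}(\f(\bx))\le v$ for $\lambda$-almost every $\bx$ in that neighbourhood, which is exactly the assertion.

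There is no serious obstacle here: the only mild care needed is the algebraic bookkeeping in inverting $\Phi$ and the observation that in the degenerate range $\gamma\ge\tfrac{1}{n+1}$, where the displayed formula for $\Phi$ breaks down, one has $w_{p}=\infty$, so the equivalence $w_{p}(\by)\le v\iff\gamma(\by)\le\gamma_v$ still holds trivially. All of the analytic content — the $(C,\alpha)$-good property on a neighbourhood of $\bx_0$ from Corollary~\ref{good_neighbourhood}, the quantitative nondivergence estimate Theorem~\ref{QND2}, and the Borel--Cantelli step — has already been absorbed into the proof of Theorem~\ref{gamma}, so this Corollary is genuinely a one-line translation of that theorem under the change of exponent (\ref{gamma_w}).
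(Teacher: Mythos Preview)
Your proposal is correct and follows exactly the route indicated in the paper, which simply says ``Now from formula~(\ref{gamma_w}) we can conclude the following'' and states the Corollary without further argument. You have supplied the explicit monotonicity check and inversion of $\Phi$ that the paper leaves implicit, but the approach is the same.
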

 
Since one can parametrize  analytic manifolds by images of open neighbourhoods under analytic functions, we have the following Theorems.
 \begin{theorem}
 	Suppose $\mathcal{M}$ is an analytic manifold of $\Q_p^n$. Let $v\geq n$ and suppose $w_{p}(\by_0)\leq v$ for some $\by\in\mathcal{M}$ then for almost every $\by$ in a neighbourhood of $\by_0$, $ w_{p}(\by)\leq v$. 
 \end{theorem}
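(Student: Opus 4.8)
The plan is to reduce the statement to the corollary just established (the one following Theorem~\ref{gamma}) by passing to a local analytic parametrization of $\mathcal{M}$ near $\by_0$. Since $\mathcal{M}$ is a $d$-dimensional analytic submanifold of $\Q_p^n$, there exist an open set $U\subseteq\Q_p^d$, a point $\bx_0\in U$, and an analytic diffeomorphism $\f\colon U\to\f(U)$ onto an open neighbourhood of $\by_0$ in $\mathcal{M}$ with $\f(\bx_0)=\by_0$. First I would record the trivial identity $w_p(\f(\bx_0))=w_p(\by_0)\leq v$.

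Next I would apply the corollary preceding this statement: with $\f$ analytic, $U$ open in $\Q_p^d$, $v\geq n$, and $w_p(\f(\bx_0))\leq v$, it produces a neighbourhood $W\subseteq U$ of $\bx_0$ such that $w_p(\f(\bx))\leq v$ for $\lambda$-almost every $\bx\in W$. (Recall that this corollary is itself obtained from Theorem~\ref{gamma} via the formula \eqref{gamma_w} relating $w_p$ to $\gamma$, which in turn rests on the quantitative nondivergence estimate, Theorem~\ref{QND2}, applied to $\bx\mapsto g_t u_{\f(\bx)}$; all of this is available.)

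Finally I would transfer the almost-everywhere statement through $\f$. The exceptional set $N:=\{\bx\in W : w_p(\f(\bx))>v\}$ satisfies $\lambda(N)=0$, and $\f|_W$ is a $p$-adic analytic diffeomorphism, hence locally bi-Lipschitz; therefore $\f(N)$ is null with respect to the smooth measure class on $\mathcal{M}$. Since $\{\by\in\f(W): w_p(\by)>v\}=\f(N)$, we conclude that $w_p(\by)\leq v$ for almost every $\by$ in the neighbourhood $\f(W)$ of $\by_0$. The only point requiring a word of justification is this last transfer — that push-forward under the analytic parametrization carries Haar-null subsets of $\Q_p^d$ to sets null in the smooth measure class of $\mathcal{M}$ — but this is immediate from the bi-Lipschitz property of an analytic diffeomorphism with invertible derivative, so there is no substantial obstacle beyond correctly setting up the local parametrization.
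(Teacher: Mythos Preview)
Your proposal is correct and follows precisely the paper's approach: the paper dispatches this theorem in a single sentence, noting that one can parametrize analytic manifolds locally by analytic maps from open subsets of $\Q_p^d$ and then invoking the preceding corollary. Your write-up simply fills in the measure-transfer details that the paper leaves implicit.
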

Therefore by Proposition 3.1 from \cite{DGpart1}, see (\ref{relation}) we also have,  
	\begin{theorem}
		Suppose $\mathcal{M}$ is an analytic manifold of $\Q_p^n$. Let $v\geq n+1$ and suppose $w(\by_0)\leq v$ for some $\by_0\in\mathcal{M}$ then for almost every $\by$ in a neighbourhood of $\by_0$, $\omega(\by)\leq v$. 
	\end{theorem}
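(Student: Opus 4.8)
The plan is to obtain this statement as an immediate corollary of the $w_p$-dichotomy theorem proved just above, by transporting everything through the pointwise identity between the two exponents. Recall from Proposition $3.1$ of \cite{DGpart1}, recorded here as (\ref{relation}), that $w_p(\by)=w(\by)+1$ for every $\by\in\Q_p^n$. Consequently the hypothesis $w(\by_0)\le v$ is equivalent to $w_p(\by_0)\le v+1$, and the conclusion ``$w(\by)\le v$ for almost every $\by$ near $\by_0$'' is equivalent to ``$w_p(\by)\le v+1$ for almost every such $\by$'', the relevant measure on $\mathcal{M}$ being the smooth measure class in both formulations.

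First I would check that the preceding theorem applies with the threshold $v+1$ in place of $v$: that theorem requires the bound to be at least $n$, and since by hypothesis $v\ge n+1$ we get $v+1\ge n+2>n$. Then, applying it to $\by_0\in\mathcal{M}$ with $w_p(\by_0)\le v+1$, I obtain a neighbourhood of $\by_0$ in $\mathcal{M}$ on which almost every point $\by$ satisfies $w_p(\by)\le v+1$. Reading this back through (\ref{relation}) gives $w(\by)\le v$ on that same neighbourhood, which is exactly the assertion (so that $\omega=w$ in the displayed conclusion).

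I do not expect any genuine obstacle here. All the substantive work — the quantitative nondivergence estimate Theorem \ref{QND2}, the $(C,\alpha)$-good property of the functions $\bx\mapsto\cov(g_tu_{\f(\bx)}\Delta)$ from Corollary \ref{good_neighbourhood}, the Minkowski comparison of Lemma \ref{minkowski}, the Borel--Cantelli step, the translation (\ref{gamma_w}) between $w_p$ and the dynamical quantity $\gamma$, and the reduction from analytic manifolds to analytic maps via local analytic parametrizations — has already been carried out in establishing the $w_p$-version. The only point requiring a moment's care is to keep track of which inequalities are ``$\le$'' versus ``$<$'' and to note that (\ref{relation}) is a genuine pointwise equality, so that a full-measure set for $w_p$ transports verbatim to a full-measure set for $w$; this is routine.
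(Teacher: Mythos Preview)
Your proposal is correct and matches the paper's own argument exactly: the paper deduces this theorem from the preceding $w_p$-version by invoking the pointwise identity $w_p(\by)=w(\by)+1$ from (\ref{relation}), which is precisely what you do (with the additional care of checking that the threshold $v+1\ge n$ is satisfied).
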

	So if one point in an analytic $p$-adic manifold is not very well approximable, then the set of not very well approximable points has positive measure. Note that this phenomenon was already clear from 
 Theorem 6.2 of \cite{DGpart1} for the manifolds which were nondegenerate inside some affine subspace. In fact, in that case the set of not VWA would have measure full by existence of one not VWA vector. The theorems above constitute $p$-adic analogues of  Theorem $1.4$ (a) of \cite{Kleinbock-dichotomy}. We have not pursued part (b) of the Theorem in loc. cit. which has to do with singular vectors.	

\section{Multiplicative Diophantine approximation}\label{multi}

The multiplicative analogues of Sprind\v{z}huk's conjectures were formulated by Baker and settled by Kleinbock and Margulis in \cite{KM}. In \cite{Kleinbock-extremal}, D. Kleinbock proves his results for affine subspaces and their nondegenerate manifolds also in the multiplicative context. The setup is more subtle but the dynamical approach is powerful enough to deal with this, one replaces the one-parameter diagonal action with a multiparameter action. In \cite{G-mult}, the second named author proved a multiplicative version of a Khintchine type theorems for hyperplanes. Further in \cite{KT}, the authors established the $S$-adic Baker-Sprind\v{z}huk conjectures, namely they also considered the multiplicative case. In \S 6.3 of \cite{Kleinbock-exponent}, D. Kleinbock refers to the possibility of proving his improved exponent results also for subspaces, some of this was accomplished in \cite{Zhang1}.

\subsection{A Dynamical correspondence for $p$-adic VWMA vectors}
In this section we define $g_{\bt}\in \GL_{n+1}(\Q_p\times\R)$ such that 
$$g_{\bt}^p:=\diag(p^{-t}, 1,\cdots,1) \text{ and } g_{\bt}^\infty:=\diag(p^{-t_0}, p^{-t_1},\cdots,p^{-t_n})$$ 
where $\bt=(t_0,\cdots,t_n)$ and $t :=\sum_{i=0}^n t_i$.	\begin{lemma}\label{dani_vwma}
		A vector $\by\in\Q_p^n$ is very well multiplicatively approximable if and only if there exist unbounded $t>0$ with $\bt\in\Z_+^{n+1} $ such that $$\delta(g_{\bt}u_{\by}\mathcal{D}^{n+1})\leq p^{-\gamma t}$$ for some $0<\gamma<\frac{1}{n+1}$.
\end{lemma}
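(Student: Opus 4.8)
The plan is to prove both implications by direct estimation of the content function, adapting the one–parameter correspondence (Lemma 6.1 of \cite{DGpart1}) to the multiparameter diagonal flow $g_{\bt}$. For $\tilde{\bq}=(q_0,\bq)\in\Z^{n+1}$ set $L:=q_0+\bq\cdot\by$ and $\bv:=g_{\bt}u_{\by}\tilde{\bq}$; then $\|\bv^{(p)}\|_p=\max\bigl(p^{t}|L|_p,\ \max_{1\le i\le n}|q_i|_p\bigr)$, $\|\bv^{(\infty)}\|_\infty=\max_{0\le i\le n}p^{-t_i}|q_i|_\infty$, and $c(\bv)$ is the product of these two, with $\delta(g_{\bt}u_{\by}\mathcal{D}^{n+1})\le c(\bv)$.

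First I treat the forward direction. Fix a VWMA parameter $\varepsilon>0$; since each $|q_i|\le\Pi_+(\tilde{\bq})$, having infinitely many solutions forces $\Pi_+(\tilde{\bq})\to\infty$ along a subsequence. For such $\tilde{\bq}$ put $s:=\lceil\tfrac{\varepsilon}{2}\log_p\Pi_+(\tilde{\bq})\rceil$ and $t_i:=\bigl\lceil\log_p|q_i|_{+}+\tfrac{s}{n+1}\bigr\rceil\ge1$, so $\bt\in\Z_{+}^{n+1}$ and $t=(1+\tfrac{\varepsilon}{2})\log_p\Pi_+(\tilde{\bq})+O(1)\to\infty$. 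From $p^{t_i}\ge|q_i|_{+}p^{s/(n+1)}$ one gets $\|\bv^{(\infty)}\|_\infty\le p^{-s/(n+1)}$, while $p^{t}|L|_p\le p^{n+2}\Pi_+(\tilde{\bq})^{-\varepsilon/2}\to0$ gives $\|\bv^{(p)}\|_p\le1$ eventually; hence $c(\bv)\le p^{-s/(n+1)}$. Since $s/(n+1)\ge\tfrac{\varepsilon}{(n+1)(2+\varepsilon)}\,t+O(1)$, this yields $c(\bv)\le p^{-\gamma t}$ for every fixed $\gamma<\tfrac{\varepsilon}{(n+1)(2+\varepsilon)}$ once $t$ is large, and such $\gamma$ lie in $(0,\tfrac1{n+1})$. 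The case $L=0$ needs no separate treatment, since $p^{t}|L|_p\to0$ holds there trivially.

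For the reverse direction, suppose $\delta(g_{\bt}u_{\by}\mathcal{D}^{n+1})\le p^{-\gamma t}$ for unbounded $t$, with $\gamma\in(0,\tfrac1{n+1})$ fixed. For each such $\bt$ choose a minimizer $\tilde{\bq}^{\ast}\in\mathcal{D}^{n+1}\setminus\{0\}$ with $c(g_{\bt}u_{\by}\tilde{\bq}^{\ast})\le p^{-\gamma t}$; as $c$ is invariant under scaling by $\mathcal{D}^{\ast}=\{\pm p^{k}\}$ (which multiplies the $p$-norm by $p^{-k}$ and the $\infty$-norm by $p^{k}$), we may take $\tilde{\bq}:=\tilde{\bq}^{\ast}\in\Z^{n+1}$ with $\max_i|q_i|_p=1$. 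Two facts drive the rest. (a) Since $\by$ is fixed there is $M=M(\by)\ge0$ with $|\bq\cdot\by|_p\le p^{M}$ for all $\bq\in\Z^{n}$; if $v_p(\bq)>M$ (or $\bq=0$) then $|L|_p=|q_0|_p=1$, whence $c(\bv)\ge p^{t}|L|_p\cdot p^{-t_0}|q_0|_\infty\ge p^{t-t_0}\ge1$, contradicting $c(\bv)<1$. So $v_p(\bq)\le M$, hence $\|\bv^{(p)}\|_p\ge p^{-M}$, hence $\|\bv^{(\infty)}\|_\infty\le p^{M-\gamma t}$, and taking the product of $|q_i|_\infty\le p^{t_i+M-\gamma t}$ over the $m$ nonzero coordinates gives $\log_p\Pi_+(\tilde{\bq})\le t(1-\gamma m)+mM$; note $\gamma m\le\gamma(n+1)<1$, which is exactly where $\gamma<\tfrac1{n+1}$ is used. (b) If $p^{t}|L|_p\le\max_{i\ge1}|q_i|_p$ then $|L|_p\le p^{-t}$; otherwise $\|\bv^{(p)}\|_p=p^{t}|L|_p$ and, using $\|\bv^{(\infty)}\|_\infty\ge\bigl(p^{-\tau}\Pi_+(\tilde{\bq})\bigr)^{1/m}$ (the maximum is at least the geometric mean, $\tau=\sum_{q_i\ne0}t_i\le t$), one gets $|L|_p\le p^{-(1+\gamma)t+\tau/m}\Pi_+(\tilde{\bq})^{-1/m}$. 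In either case, inserting the bound on $\log_p\Pi_+(\tilde{\bq})$ and choosing $\varepsilon:=\tfrac{\gamma}{2(1-\gamma)}>0$ (so that $(1+\varepsilon)(1-\gamma m)\le1-\tfrac{\gamma}{2}<1$ for every $m\le n+1$), a routine computation yields $|L|_p\le\Pi_+(\tilde{\bq})^{-(1+\varepsilon)}$ once $t$ is large. It remains to see that $\by$ is VWMA: if infinitely many distinct $\tilde{\bq}$ arise this way we are done; if only finitely many arise, then $c(\bv)\ge|L|_p\,|q_i|_\infty$ together with $c(\bv)\to0$ forces $L=0$, i.e.\ $q_0+\bq\cdot\by=0$ with $\bq\ne0$, and then $\by$ is VWMA via the multiples $p^{k}\tilde{\bq}$, whose $\Pi_+$ tends to infinity.

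The step I expect to be the main obstacle is the reverse direction, specifically that the content-minimizing lattice vector $\tilde{\bq}^{\ast}$ may be divisible by a large power of $p$ — this blocks the immediate passage to integer vectors with a $p$-unit coordinate and inflates $\Pi_+$. The point that rescues the argument is that $\by$ is fixed, so $v_p(\bq)$ is bounded by the constant $M(\by)$ and every $p$-power correction is an additive $O(1)$ term swept away as $t\to\infty$; after that, the only care needed is to choose $\varepsilon$ small enough relative to $\gamma$ and to isolate the exceptional solutions with $q_0+\bq\cdot\by=0$.
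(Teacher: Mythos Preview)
Your proof is correct and follows the same overall strategy as the paper: in the forward direction you choose $t_i$ essentially by $t_i\approx\log_p|q_i|_{+}+\text{(small positive shift)}$, exactly as the paper does, and in the reverse direction you analyze a content-minimizing lattice vector and convert the small-content condition into the VWMA inequality. The arguments agree in substance.

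The execution of the reverse direction differs in two places worth noting. First, the paper scales the minimizer by $\|\bq\|_p$ (forcing only $q'_1,\dots,q'_n\in\Z$) and later by $|q'_0|_p$, whereas you use the invariance of the content under multiplication by powers of $p$ to pass in one step to a $p$-primitive vector in $\Z^{n+1}$; this is cleaner and avoids the paper's separate treatment of $q'_0\notin\Z$. Second, the paper tracks two combinatorial parameters --- $m$ (the number of nonzero coordinates) and $k$ (the number of indices with $t_i\ge\gamma t$) --- and obtains the explicit exponent $\varepsilon=\dfrac{\gamma(m-k+mk)}{m(1-k\gamma)}$, while you keep only $m$ and choose a uniform $\varepsilon=\dfrac{\gamma}{2(1-\gamma)}$; your choice is simpler but the paper's gives a sharper value of $\varepsilon$ in terms of $\gamma$. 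Finally, your handling of the degenerate situation (finitely many $\tilde\bq$ arising forces $q_0+\bq\cdot\by=0$, hence VWMA via the multiples $p^k\tilde\bq$) is more direct than the paper's longer argument showing $\Pi_{+}(\tilde\bq')$ is unbounded. None of these differences affects correctness; they are alternative bookkeeping choices within the same proof scheme.
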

\begin{proof}
 Suppose $\delta(g_{\bt}u_{\by}\mathcal{D}^{n+1})\leq p^{-\gamma t}$, this implies that for unbounded $t>0$ and $\tilde\bq=(q_0,\bq)\in\mathcal{D}^{n+1}$, we have that
\begin{equation}\label{hyp}
\max(p^t\vert q_0+\bq\cdot{\by}\vert_p,\Vert\bq\Vert_p)\max_{i=0,\cdots,n} (p^{-t_i}\vert q_i\vert_\infty)\leq p^{-\gamma t},
\end{equation}
which implies that
\begin{equation}
\max(p^t\big\vert \Vert\bq\Vert_p q_0+\Vert\bq\Vert_p\bq\cdot{\by}\big\vert_p\Vert\bq\Vert_p,\Vert\bq\Vert_p)\max_{i=0,\cdots,n} (p^{-t_i}\vert q_i\vert_\infty)\leq p^{-\gamma t}.
\end{equation}
This implies that \begin{equation}\label{condition}
		\max(p^t\vert q'_0+\bq'\cdot{\by}\vert_p\max_{i=0,\cdots,n} (p^{-t_i}\vert q'_i\vert_\infty),\max_{i=0,\cdots,n} (p^{-t_i}\vert q'_i\vert_\infty))\leq p^{-\gamma t}
		\end{equation} 
		where $\tilde \bq'=\Vert\bq\Vert_p\tilde{\bq}$. Since $\gamma>0$ we have $\bq\neq 0$. Hence for $i=1,\cdots,n$ we have $\vert q'_i\vert_p=\frac{\vert q_i\vert_p}{\Vert\bq\Vert_p}\leq 1$ giving $q'_i\in\Z$ for such $i$.
		There exists $1\leq k\leq n+1$ such that $\#\{i~|~ t_i\geq \gamma t\}=k $, possibly considering a subsequence of $\bt$. From $\max_{i=0,\cdots,n}(p^{-t_i}\vert q'_i\vert_\infty)\leq p^{-\gamma t}$ we have that
		$$\vert q'_i\vert_\infty\leq p^{-\gamma t+t_i} \text{ and } 1\leq p^{-\gamma t+t_i}$$ 
		for $k$ no of $i$. Otherwise we have $\vert q'_i\vert_\infty\leq p^{-\gamma t+t_i}<1$.
		Therefore \begin{equation}\label{condition2}
		\begin{aligned}\Pi_{+}(\tilde \bq')\leq &\prod_{i \text{ s.t }  t_i\geq \gamma t} \vert q'_i\vert_{+}\\
		& \leq p^{-k\gamma t+\sum_{i \text{ s.t }  t_i\geq \gamma t} t_i}\\
		&\leq p^{(1-k\gamma)t}.      
		\end{aligned}\end{equation}
		Again possibly taking a subsequence, there exists $1\leq m\leq n+1$  such that $\#\{i~|~ q'_i\neq 0\}=m$. By the definition $q'_i \neq 0$ implies $\vert q_i\vert_{+}=\vert q_i\vert_{\infty}$. Now by (\ref{condition}) we have that  
		$$\vert q'_0+\bq'\cdot\by\vert_p \vert q'_i\vert_{\infty}\leq \frac{p^{-\gamma t}}{p^{t-t_i}}.$$ 
		Multiplying over all those $i$ such that $q'_i\neq 0$,
		$$\begin{aligned}
		\vert q'_0+\bq'\cdot \by\vert_p^m\cdot \Pi_{+}(\tilde{\bq}')&\leq \frac{p^{-m\gamma t}}{p^{mt-\sum_{i \text{ s.t }q'_i\neq 0}t_i}}\\ & \leq \frac{p^{-m\gamma t}}{p^{mt}}\cdot p^t\\
		&\leq p^{-(m\gamma+m-1)t}.
		\end{aligned}$$ Now from (\ref{condition2}) it follows that $$
		\begin{aligned}
		\vert q'_0+\bq'\cdot\by\vert_p^m&\leq \Pi_{+}(\tilde{\bq}')^{-(\frac{\gamma m+m-1}{1-k\gamma}+1)}\\
		& \leq \Pi_{+}(\tilde{\bq}')^{-\frac{m\gamma +m-k\gamma}{1-k\gamma}}.
		\end{aligned}$$
		Hence we have that 
		$$\vert q'_0+\bq'\cdot\by\vert_p\leq \Pi_{+}(\tilde{\bq}')^{-\frac{(m\gamma +m-k\gamma)}{m(1-k\gamma)}}.$$ 
		Since $\gamma<\frac{1}{n+1}<\frac{1}{k}$ we have $$\frac{(m\gamma +m-k\gamma)}{m(1-k\gamma)}-1=\frac{\gamma(m-k+mk)}{m(1-k\gamma)}=\varepsilon>0.$$
		 We now claim that the $\Pi_{+}(\tilde{\bq}')$ appearing here are unbounded. Note that $ \vert q_0+\bq\cdot \by\vert_p^m\cdot \Pi_{+}(\tilde{\bq})\to 0$ due to  the existence of unbounded many $t>0$. The same reason also gives $$1<\vert q_0\vert_p\vert q_0\vert_\infty\leq \max (p^{-\gamma t}, \Vert\bq\Vert_p\vert q_0\vert_\infty\Vert\by\Vert_p)$$
		 which implies that $$1\leq \Vert \bq\Vert_p\vert q_0\vert_{\infty}\Vert\by\Vert_p $$ if $q_0$ is nonzero, which says that $\vert q'_0\vert_{+}\geq c$ where $c>0$ depends only on $\by$. We denote $q_i=p^{l_i}z_i\in\mathcal{D}$ where $z_i$ are integers without any $p$ factor. Then $$\vert q'_i\vert_{+}= \left\{\begin{array}{ll}
		\vert q_i\vert_{\infty}\Vert\bq\Vert_p &\text{ if } q_i\neq 0\\
		\\
		1 &\text{otherwise}. 
		\end{array}\right.$$ Suppose $\Pi_{+}(\tilde{\bq}')\leq M$ for some $M>0$, i.e. $\prod_{i=0,\cdots,n} (\vert q'_i\vert_{+})\leq M$ which in turn implies that
		$$ \vert q'_0\vert_{+}\prod_{\{i>1~|~q_i\neq 0\}}\vert z_i\vert_\infty\leq M.$$ 
		Here we are using that if $\Vert \bq\Vert_p=p^l$ then $-l_i\leq l$ for $i>1$. Since $\vert q'_0\vert_{+}$ is bounded below by positive, there are finitely many options for the integers $z_i$ occuring in $q_i$ for $i>1$. Hence we have that 
		$$
		\vert q'_0\vert_{+}\prod_{\{i>1~|~q_i\neq 0\}}(p^{l_i}\Vert\bq\Vert_p)\leq M'
		$$ for some $M'>0$.
	 The above inequality gives \begin{equation}\label{eqn1}
		\vert q'_0\vert_{+}\prod_{\{i>1~|~q_i\neq 0\}} p^{l+l_i}\leq M'.\end{equation}
		 Hence we have $\vert q'_0\vert_{+}\leq M'$ and arguing as in (\ref{condition}) we may conclude that 
		 $$\vert z_0\vert_\infty=\vert q'_0\vert_{\infty}\vert q'_0\vert_p\leq \max (1, M'\Vert \by\Vert_p),$$ 
		 has only finitely many options. Now from (\ref{eqn1}) $$
		 c\leq\prod_{\{i~|~q_i\neq 0\}} p^{l+l_i} \leq M'.$$\noindent
		Thus we now have $-\alpha\leq\sum_{\{i~|~q_i\neq 0\}} l_i+ml\leq \alpha$ for some $\alpha>0$. Since $c\leq \vert q_0\vert _{+}\leq M'$ which gives $-\beta\leq (l_0+l)\leq \beta$  for some $\beta>0$. Hence for $i>1$ and $q_i\neq 0$ we have $0\leq l_i+l\leq \alpha'$ for some $\alpha'>0$. But then only way $ \vert q_0+\bq\cdot \by\vert_p^m\cdot \Pi_{+}(\tilde{\bq})$ can go to $0$ is if there exists some $\tilde\bq(\neq 0 )\in \Z^{n+1}$ such that $q_0+\bq\cdot\by=0$.
		\noindent
		 In that case $\by$ is very well multiplicatively approximable. So if $\by$ is not such then $\Pi_{+}(\tilde{\bq}')$ has to be unbounded and satisfies 
		$$
		\vert q'_0+\bq'\cdot\by\vert_p\leq \Pi_{+}(\tilde{\bq}')^{-(1+\varepsilon)}
		$$
		where $\tilde{\bq}'=(q'_0,\bq')\in\Z[1/p]\times\Z^n$. Another crucial observation now is that $\vert q'_0\vert_p$ is bounded above by a constant depending on $\by$. So in case $q'_0\notin \Z$ we can write $$
		\big\vert \vert q'_0\vert_p\cdot (q_0'+\bq'\cdot\by)\big\vert_p\vert q'_0\vert_p\leq \Pi_{+}(\tilde{\bq}')^{-(1+\varepsilon)}.
		$$ Now taking $\tilde {\bq}''=\vert q_0\vert_p\tilde{\bq}'$ and using the upper bound on $\vert q_0\vert_p$ enables us to conclude 
		$$  
		\vert q''_0+\bq''\cdot\by\vert_p\leq \Pi_{+}(\tilde{\bq}'')^{-(1+\varepsilon')}
		$$ for infinitely many $\tilde{\bq}''\in \Z^{n+1}$ with $\varepsilon'<\varepsilon$. Therefore $\by$ is very well multiplicatively approximable.\\
		We now prove the converse. Suppose we have that \begin{equation}\label{vwma}
		\vert q_0+\bq\cdot\by\vert_p\leq \Pi_{+}(\tilde{\bq})^{-(1+\varepsilon)}
		\end{equation} for infinitely many $\tilde{\bq}\in \Z^{n+1}$.
		Then choose $t_i>0$ such that $\vert q_i\vert_{+}=\Pi_{+}(\tilde{\bq})^{-\frac{\varepsilon}{n+1}} p^{t_i}$. Multiplying these we get $p^t=\Pi_{+}(\tilde{\bq})^{(1+\varepsilon)}$ which guarantees $t$ to be unbounded. The choice of $t_i$ gives the following condition $$
		p^{-t_i}\vert q_i\vert_\infty\leq  p^{-t_i}\vert q_i\vert_{+}\leq \Pi_{+}(\tilde{\bq})^{-\frac{\varepsilon}{n+1}}=p^{-\gamma t} ~~\forall ~i=0,\cdots,n$$ where $\gamma=\frac{\varepsilon}{(n+1)(1+\varepsilon)}$. Hence $\Vert g_{\bt}^\infty u_{\by}^\infty\tilde{\bq}\Vert_\infty\leq p^{-\gamma t}.$ On the other hand, 
		$$\vert p^{-t}(q_0+\bq\cdot\by)\vert_p=p^t\vert q_0+\bq\cdot\by\vert_p\leq p^t\cdot\frac{1}{p^t}=1,$$ due to (\ref{vwma}). Therefore we have $c(g_{\bt}u_{\by}\tilde{\bq})\leq p^{-\gamma t}$. Now taking $[\bt]$ consisting of integer factors and observing that the ratio of $\delta (g_{[\bt]}u_{\by}\mathcal{D}^{n+1})$ and $\delta (g_{\bt}u_{\by}\mathcal{D}^{n+1})$ is bounded by uniform factor. Hence reducing $\gamma$ a bit we can conclude the lemma.
		
	\end{proof}
	
	Note that one direction $(\Longrightarrow)$ of the last lemma was already observed by Kleinbock and Tomanov in \cite{KT} with a slight variation. The main content of the previous lemma is that we can come back from dynamics to number theory using the reverse direction, which was earlier not known to the best of our  knowledge.
	
	\begin{theorem}
		For any analytic manifold of $\Q_p^n$ if one point is not very well multiplicatively approximable then almost every point in a neighbourhood of that point is not very well multiplicatively approximable.
	\end{theorem}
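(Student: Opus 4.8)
The plan is to run, for the multiparameter family $\{g_{\bt}\}_{\bt\in\Z_+^{n+1}}$, the argument already used to prove Theorem \ref{gamma}, now with Lemma \ref{dani_vwma} in place of the correspondence (\ref{gamma_w}). Since an analytic manifold is locally the image of an open set $U\subseteq\Q_p^d$ under an analytic map $\f:U\to\Q_p^n$ parametrizing a neighbourhood of the given point, and the smooth measure class on $\mathcal{M}$ pulls back to the Haar class $\lambda$ on $U$, it is enough to prove: if $\f(\bx_0)$ is not VWMA for some $\bx_0\in U$, then $\f(\bx)$ is not VWMA for $\lambda$-a.e.\ $\bx$ in a neighbourhood of $\bx_0$. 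First I would record the multiparameter analogue of Corollary \ref{good_neighbourhood}: for every $\bx_0\in U$ there exist $C,\alpha>0$ and a neighbourhood $W\subseteq U$ of $\bx_0$ such that $\bx\mapsto\cov\big(g_{\bt}u_{\f(\bx)}\Delta\big)$ is $(C,\alpha)$-good on $W$ for every $\bt\in\Z_+^{n+1}$ and every $\Delta\in\mathcal{P}(\mathcal{D},n+1)$. This is proved exactly as Corollary \ref{good_neighbourhood}: by the formula (6.12) of \cite{DGpart1}, $\cov(g_{\bt}u_{\f(\bx)}\Delta)$ is a maximum of $\Q_S$-norms of linear combinations of products of at most $n+1$ of the analytic coordinate functions of $\f$, the diagonal entries $p^{-t_i}$ of $g_{\bt}$ entering only as scalar coefficients; all such functions lie in one fixed finite-dimensional space of analytic functions, on which the Proposition preceding Corollary \ref{good_neighbourhood} supplies uniform $(C,\alpha)$-good constants, and a maximum of $(C,\alpha)$-good functions is $(C,\alpha)$-good with the same constants.

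Next, following the proof of Theorem \ref{gamma}, I would set
$$U_1:=\{\bx\in U:\f(\bx)\text{ is not VWMA}\},\qquad U_2:=\{\bx\in U:\lambda(B\setminus U_1)=0\text{ for some ball }B\ni\bx\}.$$
Then $U_2$ is open and, by a standard countable-covering argument, $\lambda(U_2\setminus U_1)=0$, so it suffices to prove $\bar U_1\cap U\subseteq U_2$: then $U_2$ is an open neighbourhood of $\bx_0\in U_1\subseteq U_2$ on which $\f$ is a.e.\ not VWMA, and we are done. So fix $\bx_1\in\bar U_1\cap U$, choose a ball $B\ni\bx_1$ small enough that the enlarged ball $\tilde B$ appearing in Theorem \ref{QND2} (with $m=n+1$) is contained in $W$, and pick $\bx'\in B\cap U_1$, so that $\f(\bx')$ is not VWMA; the task is to show $\lambda(B\setminus U_1)=0$.

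By Lemma \ref{dani_vwma}, and since for each value of $t=\sum_i t_i$ there are only finitely many $\bt\in\Z_+^{n+1}$, a vector $\by$ is VWMA if and only if there is a rational $\gamma_0\in\big(0,\tfrac{1}{n+1}\big)$ with $\delta(g_{\bt}u_{\by}\mathcal{D}^{n+1})<p^{-\gamma_0 t}$ for infinitely many $\bt\in\Z_+^{n+1}$; hence it is enough to show that for each such $\gamma_0$ the set $\{\bx\in B:\delta(g_{\bt}u_{\f(\bx)}\mathcal{D}^{n+1})<p^{-\gamma_0 t}\text{ for infinitely many }\bt\}$ is $\lambda$-null. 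Fix $\gamma_0$ and put $\gamma_1:=\gamma_0/2$. Since $\f(\bx')$ is not VWMA, Lemma \ref{dani_vwma} applied with exponent $\gamma_1$ provides $T$ such that $\delta(g_{\bt}u_{\f(\bx')}\mathcal{D}^{n+1})>p^{-\gamma_1 t}$ for all $\bt\in\Z_+^{n+1}$ with $t>T$, whence Lemma \ref{minkowski} gives $\cov(g_{\bt}u_{\f(\bx')}\Delta)^{1/\rk(\Delta)}>p^{-\gamma_1 t}$ for every $\Delta\in\mathcal{P}(\mathcal{D},n+1)$. Thus hypothesis (ii) of Theorem \ref{QND2} holds for $h(\bx)=g_{\bt}u_{\f(\bx)}$ with $\rho=p^{-\gamma_1 t}<1$, while hypothesis (i) holds by the first step; applying Theorem \ref{QND2} with $\varepsilon=p^{-\gamma_0 t}\le\rho$ yields, for every $\bt\in\Z_+^{n+1}$ with $t>T$,
$$\lambda\big(\{\bx\in B:\delta(g_{\bt}u_{\f(\bx)}\mathcal{D}^{n+1})<p^{-\gamma_0 t}\}\big)\le C'\,p^{-(\gamma_0-\gamma_1)\alpha t}\,\lambda(B),$$
with $C'$ depending only on $n$, $C$, $\alpha$ and the Besicovitch and Federer constants. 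Summing over $\bt$, and noting that at most $(t+1)^{n+1}$ tuples satisfy $\sum_i t_i=t$, the total is finite because geometric decay in $t$ beats this polynomial count; Borel--Cantelli then shows that $\lambda$-a.e.\ $\bx\in B$ lies in only finitely many of these sets, i.e.\ $\f(\bx)$ is not VWMA at exponent $\gamma_0$. A countable union over rational $\gamma_0\in\big(0,\tfrac{1}{n+1}\big)$ gives $\lambda(B\setminus U_1)=0$, so $\bx_1\in U_2$, completing the proof.

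The step I expect to need the most care, beyond transcribing the proof of Theorem \ref{gamma}, is the Borel--Cantelli summation: unlike the one-parameter case the flow parameter $\bt$ now runs over an $n$-dimensional cone in $\Z_+^{n+1}$, so one must confirm that the quantitative nondivergence bound, geometrically small in $t=\sum_i t_i$, survives summation against the $O(t^{n})$ tuples on each level set $\{\sum_i t_i=t\}$; it does, comfortably. A minor secondary point is the bookkeeping passing from the non-strict inequality defining VWMA to the strict one needed to apply Theorem \ref{QND2} with $\varepsilon<\rho$, handled by dropping $\gamma_0$ to a rational value slightly below the exponent given by Lemma \ref{dani_vwma}.
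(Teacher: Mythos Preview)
Your proposal is correct and follows essentially the same approach as the paper: the paper's proof is a brief sketch that instructs one to rerun the argument of Theorem \ref{gamma} with the multiparameter flow $g_{\bt}$ and to translate between VWMA and the dynamical condition via Lemma \ref{dani_vwma}, and you have carried this out in full. The one additional point you rightly isolate---that the Borel--Cantelli sum now runs over $\bt\in\Z_+^{n+1}$ and so requires the geometric decay in $t=\sum_i t_i$ to dominate the polynomial count $O(t^n)$ of level sets---is exactly the extra wrinkle the paper leaves implicit.
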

	\begin{proof}
		Following the proof of Theorem \ref{gamma} and  considering $$U_1:=\left\{\bx\in U~\Bigg | ~\begin{aligned}&\text{ for any } 0<\gamma<\frac{1}{n+1} ,~ \delta(g_{\bt}u_{\by}\mathcal{D}^{n+1})> p^{-\gamma t} \\
		&\text{ for all but finitely many } \bt\in\Z_+^{n+1}\end{aligned}\right\}$$ one can show that $U_1$ has positive measure if nonempty. Then the using the above Lemma \ref{dani_vwma} one can conclude that if one point is not very well multiplicatively approximable then $U_1\neq\emptyset$, hence of positive measure. And then again using Lemma \ref{dani_vwma}, the conclusion follows.
	\end{proof}

	We now turn to the proof of Theorem \ref{thm:mult}. Note that by repeating the argument as in Proposition $6.1$ of \cite{DGpart1}, it can be proved that: 
	\begin{proposition}\label{prop_m1}
		Take $\mathcal{R}=\Q_p\times \R$. Let $X$ be a Besicovitch metric space and $\mu$ be a uniformly Federer measure on $X$. Denote $\tilde B:=B(x,3^{n+1}r)$. Suppose we are given a continuous function $\f : X\mapsto \Q_{p}^{n}$ and $C,\alpha >0$ with the following properties\\
		{\label{b11}\rm{(i)}}~$x\mapsto \cov(g_{\bt} u_{\f(x)}\Delta) $is $(C,\alpha)$ good  with respect to  $\mu\text{ in }\tilde B  ~\forall ~\Delta \in \mathcal P(\mathcal D,n+1),$\\
		{\label{b123}\rm{(ii)}}  for any $d>0 $ there exists $T=T(d) >0$ such that for any $\bt\in\Z_+^{n+1}$ with  $t\geq T$ and any $\Delta \in \mathcal P(\mathcal D,n+1)$ one has 
		\begin{equation}
		\sup_{x\in B\cap \supp\mu}\cov (g_{\bt}u_{\f(x)}\Delta)\geq p^{-(\rk\Delta)dt}.
		\end{equation}
		Then $\mu$ a.e every $x$, $\f(x)$ is not VWMA.
		
	\end{proposition}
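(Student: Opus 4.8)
The plan is to deduce this from Lemma \ref{dani_vwma} and the quantitative nondivergence estimate of Theorem \ref{QND2}, running the same Borel--Cantelli scheme as in the proof of Theorem \ref{gamma} but carrying the multiparameter index $\bt$ through the estimates, exactly as in Proposition $6.1$ of \cite{DGpart1}. By Lemma \ref{dani_vwma}, $\f(x)$ fails to be VWMA exactly when, for every $\gamma\in(0,\tfrac1{n+1})$, the set
$$
E_\gamma(x):=\Big\{\bt\in\Z_+^{n+1}\ :\ \delta\big(g_{\bt}u_{\f(x)}\mathcal D^{n+1}\big)\le p^{-\gamma t}\Big\}
$$
is finite, where $t=\sum_{i=0}^n t_i$. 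Since $\gamma\le\gamma'$ forces $E_{\gamma'}(x)\subseteq E_\gamma(x)$, it is enough to produce, for each $\gamma$ in a fixed sequence $\gamma_k\searrow 0$, a set of full $\mu$-measure on which $E_{\gamma_k}(x)$ is finite: any $\gamma>0$ then has $E_\gamma(x)\subseteq E_{\gamma_k}(x)$ for $\gamma_k\le\gamma$.

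Fix $\gamma=\gamma_k$ and choose $0<d<\gamma$. Hypothesis (ii) supplies $T=T(d)$ such that for every $\bt\in\Z_+^{n+1}$ with $t\ge T$ and every $\Delta\in\mathcal P(\mathcal D,n+1)$,
$$
\sup_{x\in B\cap\supp\mu}\cov\big(g_{\bt}u_{\f(x)}\Delta\big)\ \ge\ p^{-(\rk\Delta)dt}=\big(p^{-dt}\big)^{\rk\Delta}.
$$
For each such $\bt$ I would apply Theorem \ref{QND2} with $m=n+1$, $h(x)=g_{\bt}u_{\f(x)}$ (continuous since $\f$ is), $\rho=p^{-dt}\in(0,1)$, the constants $C,\alpha$ of hypothesis (i), and $\varepsilon=p^{-\gamma t}\le\rho$ (valid because $\gamma>d$): condition (i) of Theorem \ref{QND2} is hypothesis (i), condition (ii) is the display above, and the conclusion reads
$$
\mu\big(\{x\in B:\delta(g_{\bt}u_{\f(x)}\mathcal D^{n+1})<p^{-\gamma t}\}\big)\ \le\ (n+1)\,C\,\big(N_XD_\mu^2\big)^{n+1}p^{-(\gamma-d)\alpha t}\,\mu(B),
$$
with a constant independent of $\bt$.

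Summing over $\bt$ and applying Borel--Cantelli finishes the argument. The number of $\bt\in\Z_+^{n+1}$ with $\sum_{i=0}^n t_i=t$ grows only polynomially in $t$, while $\gamma-d>0$, so $\sum_{\bt:\,t\ge T}p^{-(\gamma-d)\alpha t}<\infty$ and hence $\sum_{\bt}\mu\big(\{x\in B:\delta(g_{\bt}u_{\f(x)}\mathcal D^{n+1})<p^{-\gamma t}\}\big)<\infty$. By Borel--Cantelli the set $\{\bt\in\Z_+^{n+1}:\delta(g_{\bt}u_{\f(x)}\mathcal D^{n+1})<p^{-\gamma_k t}\}$ is finite for $\mu$-a.e. $x\in B$; intersecting these full-measure sets over $k$ and noting that $E_\gamma(x)$ is contained in this set once $\gamma_k\le\gamma$, one obtains that $E_\gamma(x)$ is finite for every $\gamma\in(0,\tfrac1{n+1})$ for $\mu$-a.e. $x\in B$. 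Covering $\supp\mu$ by countably many balls on which (i)--(ii) hold and invoking Lemma \ref{dani_vwma} then gives that $\f(x)$ is not VWMA for $\mu$-a.e. $x$.

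The argument is routine once Lemma \ref{dani_vwma} and Theorem \ref{QND2} are in hand; the step that needs care is the multiparameter bookkeeping. One must check that the threshold $T(d)$ in hypothesis (ii) is uniform over all $\bt$ with $t\ge T$ rather than over a single scalar parameter, that the constant furnished by Theorem \ref{QND2} depends only on $C,\alpha,N_X,D_\mu$ and $m=n+1$ and hence not on $\bt$, and that the polynomial count $\#\{\bt:\sum t_i=t\}$ is dominated by the geometric factor $p^{-(\gamma-d)\alpha t}$ so that the Borel--Cantelli series converges. These are precisely the adaptations already made for Proposition $6.1$ of \cite{DGpart1}.
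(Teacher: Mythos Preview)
Your proposal is correct and matches the approach the paper indicates: it defers to Proposition~6.1 of \cite{DGpart1}, which is precisely the Borel--Cantelli scheme layered on top of Theorem~\ref{QND2}, with the only new wrinkle being the multiparameter sum over $\bt\in\Z_+^{n+1}$ handled by the polynomial count of $\#\{\bt:\sum t_i=t\}$. Your use of Lemma~\ref{dani_vwma} to translate the VWMA condition into the dynamical inequality is exactly the replacement for the single-parameter correspondence used in \cite{DGpart1}, and the choice $\rho=p^{-dt}$, $\varepsilon=p^{-\gamma t}$ with $d<\gamma$ is the correct adaptation.
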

Note that condition $\rm{(ii)}$ in the above Lemma is actually necessary. If $\rm{(ii)}$ does not hold then there exists unbounded $t$ with $\bt\in\Z_+^{n+1} $ such that $$\delta(g_{\bt}u_{\f(x)}\mathcal{D}^{n+1})\leq p^{-dt}$$ for some $0<d<\frac{1}{n+1}$ for all $x\in B\cap\supp\mu$. Now from the above lemma we have that $\f(x)\in\mathcal{WM}_{\frac{md+m-kd}{m(1-kd)}}$ for some $1\leq m,k\leq n+1$. Since $\frac{md+m-kd}{m(1-kd)}>\frac{1}{1-d}>1$ we have that $\f(B\cap\supp\mu)\subset \mathcal{WM}_{\frac{1}{1-d}}$ i.e $\f(x)$ is VWMA for all $x\in B\cap\supp\mu$. 
\noindent Also condition $\rm{(ii)}$ in the above proposition is same as \begin{equation}\label{covolume1m}\begin{aligned}
& \text{ for any } d>0~
\exists ~T=T(d) >0 \text{ such that for any } \bt\in\Z_+^{n+1} \text{ with } t\geq T \\& \forall~j=1,\cdots, n \text{ and } \forall~\bw\in \bigwedge^{j}\mathcal{D}^{n+1},\text{ one has } \\&
\max\bigg( p^{t}\Vert R\bc(\bw) \Vert_{p},\Vert \pi(\bw)\Vert_{p}\bigg)\max_I{p^{-t_{I}}\Vert w_I\Vert_\infty}\geq p^{-jdt},
\end{aligned}
\end{equation} 
which is independent of $\f$ but depends on the affine subspace in which manifold is nondegenerate.

\noindent Combining all these previous observations and repeating the same arguments as in section $\S 7$ of \cite{DGpart1}, we have the following. 
\begin{theorem}
	Let $\mu$ be a Federer measure on a Besicovitch metric space $X, \mathcal{L}$ an affine subspace of $\Q_p^n$, and let $\f : X\to \mathcal{L}$ be a continuous map such that $(\f, \mu)$ is good and nonplanar. Then the following are equivalent: \\
	\rm{(i)} \begin{equation}
	\big\{x~\in\supp\mu ~| \f(x) \text{ is not VWMA }\big\} \text{ is nonempty}.
	\end{equation}
	\noindent\rm{(ii)}\begin{equation} ~~~ 
\f(x) \text{ is not VWMA for } \mu\text{ a.e } x.
	\end{equation}
	\rm{(iii)}\begin{equation}
\text{ Condition } (\ref{covolume1m}) \text{ holds .}
	\end{equation}
\end{theorem}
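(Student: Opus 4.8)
The plan is to close the cycle $(ii)\Rightarrow(i)\Rightarrow(iii)\Rightarrow(ii)$. The implication $(ii)\Rightarrow(i)$ is immediate: $\mu$ is a nonzero Radon measure, so $\supp\mu\neq\emptyset$, and if $\f(x)$ is not VWMA for $\mu$-a.e.\ $x$ then the set in $(i)$ is nonempty. For $(iii)\Rightarrow(ii)$ I would appeal directly to Proposition \ref{prop_m1}, applied on a ball $B$ around an arbitrary point of $\supp\mu$ and then patched together by a standard covering argument (here one uses that $X$ is Besicovitch and that $\mu$ is Federer). Hypothesis $(ii)$ of that proposition is, verbatim, condition $(\ref{covolume1m})$, hence is supplied by $(iii)$. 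Hypothesis $(i)$ — that $x\mapsto\cov(g_{\bt}u_{\f(x)}\Delta)$ is $(C,\alpha)$-good on $\tilde B$ for a single pair $(C,\alpha)$ independent of $\bt$ and of $\Delta$ — follows from the hypothesis that $(\f,\mu)$ is good together with the formula, recalled from (6.12) of \cite{DGpart1}, expressing $\cov(g_{\bt}u_{\f(x)}\Delta)$ as a maximum of norms of $\Q$-linear combinations of the coordinate functions of $\f$ (with coefficients that are fixed powers of $p$): $(C,\alpha)$-goodness is stable under multiplication by nonzero constants and under taking a maximum of at most $2^{n+1}$ functions, a bound that does not depend on $\bt$ or $\Delta$.

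The substantive direction is $(i)\Rightarrow(iii)$, which I would prove in contrapositive form, following the discussion after Proposition \ref{prop_m1}. Suppose $(\ref{covolume1m})$ fails. Then there is some $d>0$ — and, since failure of the displayed bound for a given $d$ forces its failure for every smaller value, we may assume $0<d<\tfrac1{n+1}$ — together with a sequence $\bt_m\in\Z_+^{n+1}$ with $t_m\to\infty$, exponents $j_m\in\{1,\dots,n\}$ and $\bw_m\in\bigwedge^{j_m}\mathcal{D}^{n+1}$ for which the left-hand side of $(\ref{covolume1m})$ associated to $\bw_m$ and $\bt_m$ is $<p^{-j_m d t_m}$. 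Passing to a subsequence, I may assume $j_m\equiv j$. The crucial input is the reduction carried out in \S 7 of \cite{DGpart1}, which uses nonplanarity of $(\f,\mu)$ and $\f(X)\subset\mathcal{L}$: the $\f$- and $x$-free quantity on the left of $(\ref{covolume1m})$ attached to $\bw_m$ equals $\sup_{x\in B\cap\supp\mu}\cov(g_{\bt_m}u_{\f(x)}\Delta_m)$, where $\Delta_m\in\mathcal{P}(\mathcal{D},n+1)$ is the primitive submodule determined by $\bw_m$. Hence $\cov(g_{\bt_m}u_{\f(x)}\Delta_m)<p^{-jd t_m}$ for every $x\in\supp\mu$ and every $m$. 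Applying Lemma \ref{minkowski} yields $\delta(g_{\bt_m}u_{\f(x)}\mathcal{D}^{n+1})\leq\cov(g_{\bt_m}u_{\f(x)}\Delta_m)^{1/j}<p^{-d t_m}$ for every $x\in\supp\mu$; since $t_m\to\infty$ and $0<d<\tfrac1{n+1}$, Lemma \ref{dani_vwma} then shows that $\f(x)$ is VWMA for every $x\in\supp\mu$, so the set in $(i)$ is empty. This is exactly $\neg(iii)\Rightarrow\neg(i)$, which completes the cycle.

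I expect the main obstacle to be the step used in $(i)\Rightarrow(iii)$: identifying the $\f$- and $x$-free expression in $(\ref{covolume1m})$ with $\sup_{x\in B\cap\supp\mu}\cov(g_{\bt}u_{\f(x)}\Delta)$. This is where the affine-subspace structure and the nonplanarity of $(\f,\mu)$ genuinely enter, and it amounts to transcribing the linear-algebra computation of \S 7 of \cite{DGpart1} — decomposing $\bigwedge^{j}\mathcal{D}^{n+1}$ relative to the linear part of $\mathcal{L}$ and tracking how $g_{\bt}u_{\f(x)}$ acts on each piece — into the multiplicative, multiparameter setting. The only other point needing care is the bookkeeping that turns the failure of $(\ref{covolume1m})$ (a statement quantified over all $d$, all $\bt$ with $t\geq T$, all $j$ and all $\bw$) into a single sequence $(\bt_m,\Delta_m)$ of fixed rank with $t_m\to\infty$; this is handled by the elementary monotonicity-in-$d$ observation and a pigeonhole on $j$, as above.
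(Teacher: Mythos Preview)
Your proposal is correct and follows essentially the same approach as the paper: the paper's proof is literally the one-line remark ``Combining all these previous observations and repeating the same arguments as in section \S 7 of \cite{DGpart1}'', and the ``previous observations'' it refers to are precisely Proposition~\ref{prop_m1} for $(iii)\Rightarrow(ii)$, the paragraph immediately following it (together with Lemma~\ref{dani_vwma} and Lemma~\ref{minkowski}) for $\neg(iii)\Rightarrow\neg(i)$, and the identification of condition (ii) of Proposition~\ref{prop_m1} with (\ref{covolume1m}) via the linear-algebra reduction of \S 7 of \cite{DGpart1}. Your write-up simply unpacks these references, including the monotonicity-in-$d$ and pigeonhole-on-$j$ bookkeeping that the paper leaves implicit.
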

Theorem \ref{thm:mult} now follows as a corollary using Theorem $4.3$ of \cite{KT}.


\end{document}